\newtheorem{theorem}{Theorem}[section]
\newtheorem{proposition}[theorem]{Proposition}
\newtheorem*{theorem*}{Theorem}
\newtheorem{lemma}[theorem]{Lemma}
\newtheorem{quest}[theorem]{Question}
\theoremstyle{definition}
\newtheorem*{amalgamation*}{Amalgamation}
\newtheorem{example}[theorem]{Example}
\newtheorem{remark}[theorem]{Remark}
\newtheorem*{remark*}{Remark}
\newtheorem{definition}[theorem]{Definition}
\newcommand\R{{\mathbb R}}
\newcommand\C{{\mathbb C}}
\newcommand\Z{{\mathbb Z}}
\newcommand\N{{\mathbb N}}
\newcommand{\norm}[1]{\lVert #1 \rVert}
\def\@tocline#1#2#3#4#5#6#7{\relax
	\ifnum #1>\c@tocdepth 
	\else
	\par \addpenalty\@secpenalty\addvspace{#2}%
	\begingroup \hyphenpenalty\@M
	\@ifempty{#4}{%
		\@tempdima\csname r@tocindent\number#1\endcsname\relax
	}{%
		\@tempdima#4\relax
	}%
	\parindent\z@ \leftskip#3\relax \advance\leftskip\@tempdima\relax
	\rightskip\@pnumwidth plus4em \parfillskip-\@pnumwidth
	#5\leavevmode\hskip-\@tempdima
	\ifcase #1
	\or\or \hskip 1em \or \hskip 2em \else \hskip 3em \fi%
	#6\nobreak\relax
	\dotfill\hbox to\@pnumwidth{\@tocpagenum{#7}}\par
	\nobreak
	\endgroup
	\fi}
\DeclareRobustCommand{\SkipTocEntry}[5]{}
\begin{document}

\title{On Lipschitz Normally Embedded singularities}

\author{Lorenzo Fantini}
\email{\href{mailto:lorenzo.fantini@polytechnique.edu}{lorenzo.fantini@polytechnique.edu}}
\address{Centre de Mathématiques Laurent Schwartz, Ecole Polytechnique and CNRS, Institut Polytechnique de Paris,}

\author{Anne Pichon}
\email{\href{mailto:anne.pichon@univ-amu.fr}{anne.pichon@univ-amu.fr}}
\address{Aix-Marseille Universit\'e, CNRS, Centrale Marseille, I2M, Marseille, France}

\begin{abstract}
	Any subanalytic germ $(X,0) \subset ( \R^n,0)$ is  equipped with two natural metrics: its \emph{outer metric}, induced by the standard Euclidean metric of the ambient space, and its \emph{inner metric}, which is defined by measuring the shortest length of paths on the germ $(X,0)$. 
	The germs for which these two metrics are equivalent up to a bilipschitz homeomorphism, which are called \emph{Lipschitz Normally Embedded}, have attracted a lot of interest in the last decade.
	In this survey we discuss many general facts about Lipschitz Normally Embedded singularities, before moving our focus to some recent developments on criteria, examples, and properties of Lipschitz Normally Embedded complex surfaces. 
	We conclude the manuscript with a list of open questions which we believe to be worth of interest.
\end{abstract}

\maketitle

\bigskip

\hfill
\begin{minipage}[t]{0.59\textwidth}
\textit{This paper is dedicated to Walter Neumann,\\ wonderful friend and outstanding mathematician.}
\end{minipage}

\tableofcontents

\section{Definition, first examples, and some general results}

\label{sec:1}

\subsection{Definition}

Let $(X,d_X)$ and $(Y,d_Y)$ be two metric spaces.
A homeomorphism $\phi \colon X \to Y$ is said to be a \emph{bilipschitz equivalence} if there exist two positive real numbers $K_1$ and $K_2$ such that, given any two points $x$ and $x'$ in $X$, we have
\[
K_1\  d_X(x, x') \leq  d_Y\big(\phi(x), \phi(x')\big)  \leq K_2 \  d_X(x, x').
\]
Two metric spaces are said to be \emph{bilipschitz equivalent} if there exists a bilipschitz equivalence from one to the other.

A subanalytic subspace $X$ of $\R^n$ is naturally equipped with two metrics on $(X,0)$: its \emph{outer metric} $d_{o}$, induced by the standard Euclidean metric of the ambient space, and its \emph{inner metric} $d_{i}$, which is the associated arc-length metric on the germ, defined as follows: 
\[
d_i(x,y) = \inf\big\{\mathop{\rm length}(\gamma)\,\big\vert\,
\gamma\text{ is a rectifiable path in }X\text{ from }x\text{ to
}y\big\}.
\]
Note that for an arc to be \emph{rectifiable} essentially means that its length can be computed and is finite, see \cite{Gromov1981} for details. Given any two points $x$ and $y$ in $X$, we have $d_o(x,y) \leq d_i(x,y)$.
Moreover, the inner distance between two given points can be computed as a limit of sums of outer distances, so that two spaces which are bilipschitz equivalent for the outer metric are bilipschitz equivalent for the inner metric as well. 
In general, the converse  does not hold, but there exists a special class of spaces, or of space germs, which have the remarkable property that their inner and outer bilipschitz classes coincide, in the following sense.

\begin{definition} 
	A subanalytic subspace $X$ of $\R^n$ is \emph{Lipschitz Normally Embedded} (or simply \emph{LNE}) if  the identity map on $X$ is a bilipschitz equivalence between its inner and outer metrics, that is if there exists a real number $K\geq 1$ such that, for all $x,y$ in $X$, we have
\[
\frac{1}{K} d_i(x,y) \leq d_o(x,y).
\]
	If $x$ is a point of $X$, the germ $(X,x)$ is \emph{LNE} if there is a neighborhood $U$ of $x$ in $\R^n$ such that $X \cap U$ is LNE.
\end{definition}


Since the inner and the outer geometries of $(X,x)$ are invariant under bilipschitz homeomorphisms (see \cite[Proposition~7.2.13]{Pichon2020}), this property only depends on the subanalytic type $(X,x)$, and not on the choice of an embedding in some smooth ambient space $(\R^n,0)$.\footnote{Note that while the result of \emph{loc. cit.} is only stated in the semialgebraic setting, its proof carries through for arbitrary subanalytic germs.}

This definition was first introduced by Birbrair and Mostowski in \cite{BirbrairMostowski2000}; the main result of that seminal paper is presented in Subsection~\ref{subsection:normal_reembedding_and_pancake}.
Note that in \emph{loc. cit.} LNE spaces are simply called {\it normally embedded}; in the subsequent literature on the subject the term {\it Lipschitz} was added to distinguish this notion from those of projective normal embedding (in algebraic geometry) and normality (in local geometry, commutative algebra and singularity theory).

Notice that a compact space $X$ is LNE if and only if the germs $(X,x)$ are LNE for all points $x$ if $X$. 
Our aim is to present a state of the art on the LNE-ness of real and complex analytic germs.

\subsection{First examples}

\begin{example}
	A smooth germ $(X,0)$ is Lipschitz Normally Embedded, since it is analytically equivalent to $(\R^n,0)$, where the inner metric and the outer metric coincide. 
\end{example}

\begin{example} 
	Let $Y \subset \R^n$ be a subanalytic subspace of the sphere $S^{n-1}$ of radius $1$ centered at the origin of $\R^n$, and assume that $Y$ is LNE.
	Then the cone $C(Y,0)$ over $Y$ with apex $0$, which consists of the union of the half-lines with origin $0$ that intersect $Y$, is LNE as well. 
\end{example}

\begin{example} \label{ex:non LNE}
	The germ $(C,0)$ of the real cusp $C$ with equation $y^2-x^3=0$ in $\R^2$ is not LNE.
	Indeed, given a real number $t>0$, consider the two points $p_1(t) = (t,t^{3/2})$ and $p_2(t) = (t,-t^{3/2})$ on $C$ (see Figure~\ref{fig:1}). 
	Then $d_o\big(p_1(t), p_2(t)\big) = 2 t^{3/2}$, so that in the germ, as $t$ goes two zero, the outer distance between $p_1(t)$ and $p_2(t)$ has order $t^{3/2}$, which we write as $d_o\big(p_1(t), p_2(t)\big) = \Theta(t^{3/2})$.%
	\footnote{More precisely, throughout this text, we use the \emph{big-Theta} asymptotic notations of Bachmann--Landau in the following form: given two function germs $f,g\colon ([0,\infty),0)\to ([0,\infty),0)$, we say that $f$ \emph{is big-Theta of} $g$, and we write $f(t) = \Theta (g(t))$, if there exist $\eta>0$ and $K >0$ such that ${K}^{-1}g(t) \leq f(t) \leq K g(t)$ for all $t$ satisfying $f(t)\leq \eta$.}
	On the other hand, the  shortest path on $C$ between the two points $p_1(t)$ and $p_2(t)$ is obtained by taking a path going through the origin, so that we have $d_i\big(p_1(t), p_2(t)\big) = \Theta(t)$. 
	Therefore, taking the limit of the quotient as $t$ tends to $0$, we obtain: 
	\[
	\frac{d_o\big(p_1(t), p_2(t)\big)}{d_i\big(p_1(t), p_2(t)\big)} = \Theta(t^{1/2}) \longrightarrow 0.
	\]
	Note that the existence of two such arcs $p_1$ and $p_2$ is due to the fact that the tangent cone $T_0X$ of $(X,0)$ at $0$ is not reduced (it has equation $y^2=0$). 
	This is an occurrence of a general result which will be stated as Theorem \ref{thm:tangent cone}.
    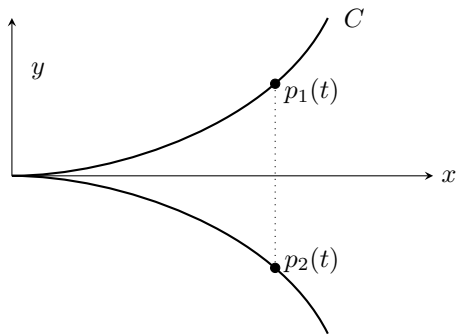
\begin{figure}[ht]
    \centering
\begin{tikzpicture} 
\begin{scope}[scale=0.7]
\draw[thick](0,0) .. controls (2,0) and (5,1)..  (6,3) ;
\draw[thick](0,0) .. controls (2,0) and (5,-1)..  (6,-3) ;

\draw[thin,>=stealth,->](0,0)--(8,0);
\draw[thin,>=stealth,->](0,0)--(0,3);

\draw[fill](5,1.75)circle(2.5pt);
\draw[fill](5,-1.75)circle(2.5pt);

\draw[dotted](5,1.75)--(5,-1.75);
    
    \node(a)at(8.3,0){$x$};
    \node(b)at(6.5,3){$C$};
     \node(b)at(5.7,1.6){$p_1(t)$};
          \node(b)at(5.7,-1.6){$p_2(t)$};

              \node(c)at(0.5,2){$y$};
                                 
     \end{scope}             
\end{tikzpicture}
  \caption{The real cusp $y^2-x^3=0$}
    \label{fig:1}
\end{figure}
\end{example}

\begin{example} 
	A complex curve germ $(C, 0) \subset (\C^N,0)$ is LNE if and only if it consists of smooth transversal curve germs.
	Indeed, if the latter is true then $(C,0)$ is analytically equivalent to the germ of a union of transversal lines, which being a cone is LNE.
	The converse is more delicate and can be obtained by combining several results. 
	First, if  $(C, 0) \subset (\C^N,0)$ is a complex curve germ, then any \emph{generic} linear projection $\ell \colon \C^N \to \C^2$  restricts to the germ of a bilipschitz homeomorphism $\ell |_{(C,0)} \colon (C,0) \to \big(\ell(C), 0\big)$ for the outer metric  (\cite[pp. 352-354]{Teissier1982}). 
	Therefore, it suffices to prove the result for a plane curve  $(C, 0) \subset (\C^2,0)$.  
	The key argument, which is close to the one presented in Example~\ref{ex:non LNE}, is that a complex curve germ $(C,0) \subset (\C^2,0)$ admitting a non essential Puiseux exponent $q>1$ contains two arcs $p_1(t)$ and $p_2(t)$ such that $d_i\big(p_1(t), p_2(t)\big) = \Theta(t)$ and  $d_o(p_1(t), p_2(t)) = \Theta(t^q)$, and therefore $(C,0)$  cannot be LNE. 
	For example, the complex cusp $y^2-x^3=0$ in $\C^2$, which has has Puiseux expansion $y=x^{3/2}$, contains the two  arcs $p_1(t) =  (t,t^{3/2})$ and  $p_2(t) = (t,- t^{3/2})$ whose inner distance is $\Theta(t)$ and whose outer distance is $\Theta(t^{3/2})$. 

	Notice that, more generally, the inner and outer  bilipschitz types of complex curve germs are completely understood. 
	On the one hand, the inner bilipschitz geometry of a complex curve $(C, 0)$ is trivial in the sense that for the inner metric $(C, 0)$ is bilipschitz equivalent to a straight cone over its link, that is to a union of smooth transversal curve germs (see \cite[Proposition 7.2.2]{Pichon2020}).  
	On the other hand, the outer bilipschitz type of $(C,0)$ determines and is determined by its embedded topological type; for an algebraic proof  of this result involving Lipschitz saturation of ideals, see the pioneering paper by Pham and Teissier \cite{PhamTeissier1969} or its recent English translation \cite{PhamTeissier2020}; for a more geometric approach, see \cite{Fernandes2003} or \cite{NeumannPichon2014}. 
\end{example}

\begin{example} 
	Starting from dimension 3 it is easy to find examples of non-LNE complex analytic germs which have non-isolated singularities, for example by taking the product of a non-LNE germ with a line.
	For instance, the product of a real cusp with a real line, that is the complex hypersurface in $\C^3$ with equation $y^2-x^3=0$, is not LNE. 
	One gets other examples by taking a homogeneous complex space with a non-LNE link; such an example is given by the hypersurface germ in $(\C^3,0)$ with equation $x^2z+y^3=0$
\end{example}

\begin{example} 
	The first examples of non-LNE complex surface germs with an isolated singularity were obtained by Birbrair, Fernandes, and Neumann in 2010 \cite{BirbrairFernandesNeumann2010}. 
	It is the family of  Brieskorn surfaces $x^b + y^b + z^a = 0$  where $b>a$ and $a$ is not a divisor of $b$. 
	In fact, while few examples of families of LNE singularities are known, it is still unclear whether LNE-ness is common among complex singularities with isolated singularities, even in the case of surfaces.  
	The second part of the present paper discusses several recent advances on this front.   
\end{example}

\begin{example} 
	The space of $n \times m$ real and complex matrices also contain remarkable families of LNE subspaces. 
	For example, the Lie group $GL^+_n(\R)$ consisting of $n \times n$ matrices with positive determinant is LNE, and so are the set of $n \times n$ matrices $X^{n-1}$ with rank $n-1$ and its closure, which is the set of matrices of determinant zero (\cite{KatzKatzKernerLiokumovich2018}).
	These results are generalized in \cite{KernerPedersenRuas2018} to the sets $X_t$ of $m \times n$ matrices of given rank $t \leq \min(m,n)$ and their closures $\overline{X_t}$ by using elementary arguments of linear algebra and trigonometry, and LNE-ness is also proved in \emph{loc.\ cit.} for other families such as symmetric and skew-symmetric matrices of given rank $t$ and their closures, upper triangular matrices with determinant zero, and the intersections of those spaces with some linear subspaces.  
\end{example}

\subsection{The pancake decomposition, the pancake metric, and the embedding problem}
\label{subsection:normal_reembedding_and_pancake}

In this subsection, we present three important theorems which can be considered as the first historical results around Lipschitz Normal Embeddings.
We will state them in the semialgebraic setting, but they remain true in the subanalytic and polynomially bounded $o$-minimal categories with the obvious adaptations.

Since LNE spaces can be thought of as the simplest ones with respect to inner and outer Lipschitz geometries, it is natural to ask whether every semialgebraic subset of $\R^n$ admits a finite decomposition as a union of LNE sets.
The answer is positive, as was established by Parusinski and Kurdyka:

\begin{theorem} [Pancake Decomposition \cite{Parusinski1988,Kurdyka1992,Parusinski1994}] \label{theorem:pancake decomposition} 
Let $X \subset \R^n$ be a closed
semialgebraic set. 
Then we can write
\[
X=\bigcup_{i=1}^r  X_i
\]
as a finite union of closed semialgebraic subsets of such that:   
\begin{enumerate}
\item  all $X_i$ are LNE;
\item  for every $i \neq j$ we have $\dim(X_i \cap X_j)< \min( \dim X_i, \dim X_j) $.
\end{enumerate}
\end{theorem}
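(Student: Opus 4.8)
The plan is to argue by induction on $\dim X$, reducing the whole statement to two facts about \emph{$L$-regular cells}, which are precisely the pieces on which inner and outer metrics are comparable by construction. Recall that, after a suitable permutation of the coordinates $\R^n=\R^{n-1}\times\R$, an $L$-regular cell of $\R^n$ is a set $C=\{(\bar x,t):\bar x\in C',\ \phi(\bar x)<t<\psi(\bar x)\}$ — or the graph $t=\phi(\bar x)$, with $\phi\equiv-\infty$ and/or $\psi\equiv+\infty$ allowed — where $C'\subseteq\R^{n-1}$ is itself an $L$-regular cell and $\phi\le\psi$ are semialgebraic $C^1$ functions on $C'$ with $\norm{\nabla\phi}\le L$ and $\norm{\nabla\psi}\le L$; a $0$-dimensional such cell is a point. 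The two ingredients are: \textbf{(A)} every closed semialgebraic set $X$ admits a finite decomposition into disjoint $L$-regular cells satisfying the frontier condition (the topological frontier of each cell is a union of other cells); and \textbf{(B)} the closure of an $L$-regular cell is LNE. Granting these, the final assembly is mostly bookkeeping, described at the end.

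For \textbf{(B)}, the conceptual core but the easier half, I would induct on $n$, the case $n=0$ being a point. Let $C$ be an $L$-regular cell over $C'\subseteq\R^{n-1}$; by the inductive hypothesis $\overline{C'}$ is LNE with some constant $K'$, so any two of its points are joined by a path of length at most $2K'$ times their distance, and the Lipschitz functions $\phi,\psi$ extend continuously to $\overline{C'}$. Given $x=(\bar a,s)$ and $y=(\bar b,u)$ in $\overline C$ — so $\bar a,\bar b\in\overline{C'}$, since $\overline C\subseteq\overline{C'}\times\R$ — choose a path $\bar\gamma$ in $\overline{C'}$ from $\bar a$ to $\bar b$ of length at most $2K'\,d_o(\bar a,\bar b)\le 2K'\,d_o(x,y)$, lift it to a path in $\overline C$ by taking, at each parameter, the last coordinate to be the point of the fiber $[\phi,\psi]$ over the current base point that is closest to $s$, and finish with a vertical segment inside the fiber of $\overline C$ over $\bar b$, joining the endpoint of the lift to $y$. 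Since $\phi,\psi$ are Lipschitz the last coordinate along the lift moves at speed bounded by a fixed multiple of that of $\bar\gamma$, and the final vertical segment has length at most that displacement plus $|s-u|\le d_o(x,y)$; hence the total length is at most a constant depending only on $L$ and $K'$ times $d_o(x,y)$, and as $d_o\le d_i$ always holds, $\overline C$ is LNE.

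Ingredient \textbf{(A)} is where the genuine semialgebraic content lies, and it is the step I expect to be the main obstacle: it is Kurdyka's $L$-regular (``pancake'') decomposition, for which I do not see a shortcut. One starts from an ordinary semialgebraic cell decomposition of $X$, whose cells are graphs of semialgebraic maps but whose defining functions need not have bounded gradient. The remedy is to subdivide each base cell according to the size of the partial derivatives of the defining functions: on the locus where some $\partial f/\partial x_j$ is large, the tangent spaces to the graph are nearly vertical in the $x_j$-direction, so transposing $x_j$ with the corresponding graph coordinate yields a new graph, over a new base, with strictly smaller gradient. The Łojasiewicz inequality — equivalently, the polynomial boundedness of the semialgebraic category, or of any polynomially bounded $o$-minimal structure — guarantees that, carried out over a semialgebraic partition of the base and with an induction on the ambient dimension, this straightening terminates after finitely many steps and uses only finitely many coordinate frames; incorporating the requirements that each base be $L$-regular, that the result satisfy the frontier condition, and that it be compatible with any prescribed finite family of semialgebraic subsets costs only further finite subdivision combined with re-straightening, which terminates for the same reason.

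It remains to deduce (1) and (2). Apply \textbf{(A)} to $X$ to obtain a disjoint union $X=\bigsqcup_\alpha C_\alpha$ of $L$-regular cells satisfying the frontier condition, and let $X_1,\dots,X_r$ be the closures of the \emph{maximal} cells, those not contained in the frontier of any other cell; each $X_i$ is closed and, by \textbf{(B)}, LNE, so (1) holds. Every cell lies in the closure of some maximal cell — follow a chain of face relations, along which the dimension strictly increases, until it terminates — so $X=X_1\cup\dots\cup X_r$. Finally, for $i\ne j$ the frontier condition gives $X_i\cap X_j=\overline{C_i}\cap\overline{C_j}=\bigcup\{C_\gamma:C_\gamma\subseteq\overline{C_i}\text{ and }C_\gamma\subseteq\overline{C_j}\}$; since $C_i$ and $C_j$ are distinct and maximal, each such $C_\gamma$ is a proper face of both, so $\dim C_\gamma<\dim C_i$ and $\dim C_\gamma<\dim C_j$. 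Hence $\dim(X_i\cap X_j)<\min(\dim X_i,\dim X_j)$, which is (2), and the argument is complete.
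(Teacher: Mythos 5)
The paper does not prove this theorem: it is quoted as a known result with citations to Parusi\'nski and Kurdyka, so there is no in-paper argument to compare against. Your route --- decompose $X$ into $L$-regular cells, prove that the closure of an $L$-regular cell is LNE, then take closures of the maximal cells --- is precisely the strategy of those cited works, and your ingredient \textbf{(B)} and the final assembly are essentially complete: the clamped lift of $\bar\gamma$ and the face-relation bookkeeping for conditions (1) and (2) are correct (two small wrinkles in \textbf{(B)}: you should note that $\|\nabla\phi\|\le L$ only makes $\phi$ Lipschitz for the \emph{inner} metric of $C'$, so the inductive LNE-ness of $\overline{C'}$ is needed already to get the outer Lipschitz bound on $\phi,\psi$ and on the clamp map, and that the inner metric of $\overline{C'}$ computed with paths in $\overline{C'}$ agrees with the one computed with paths in $C'$). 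The substantive gap, which you candidly flag yourself, is ingredient \textbf{(A)}: the existence of a finite $L$-regular cell decomposition compatible with the frontier condition \emph{is} the theorem being cited, and your sketch of it (subdivide where partial derivatives are large, transpose coordinates to straighten, invoke polynomial boundedness for termination) names the right ideas but does not establish the two genuinely delicate points --- that the straightening terminates with a constant $L$ depending only on $n$ and a \emph{finite} set of coordinate frames, and that the frontier condition can be restored after each subdivision without restarting the induction. As written, your argument is therefore a correct reduction of the Pancake Decomposition to Kurdyka's $L$-regular decomposition theorem rather than a self-contained proof; that is a reasonable outcome given that the statement is itself a deep cited result, but you should present \textbf{(A)} explicitly as an appeal to \cite{Kurdyka1992,Parusinski1994} rather than as a step you have verified.
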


This remarkable result has several important consequences. 
First, it enables to approach the following natural question: given a closed connected  subset semialgebraic subset $X$ in $\R^n$ is the inner metric $d_i\colon X\times X\to \R_{\geq 0}$ a semialgebraic function?
Note that this is clearly the case for the outer metric on $X$.

The following theorem, proved by Kurdyka and Orro, states that $d_i$ is bilipschitz equivalent to a semialgebraic metric with a bilipschitz constant as close as we want from $1$. 
To define such a semialgebraic metric, consider a pancake decomposition $P = \{X_i\}_{i=1}^r$ of $X$. 
Given two points $x, y $ in $X$ let $Z_{x,y}$ be the set consisting of all the finite ordered sequences $z = (z_1,\ldots,z_s)$ of points  on $X$ such that 
$z_1=x$, $z_s=y$,  and for every $k\in\{1,\ldots, s-1\}$, there is a pancake 
$X_{i_k}$ such that $X_{i_k} \cap \{ z_1,\ldots,z_s\} = \{z_k , z_{k+1}\}$. 
Finally, set 
\[
d_P(x,y) = \inf_{(z_1,\ldots,z_s) \in Z_{x,y}}   \sum_{k=1}^{s-1} d_i(z_k, z_{k+1}).
\]

\begin{theorem}[Pancake metric, \cite{KurdykaOrro1997}]
	\label{theorem:pancake_metric}
The function $d_p \colon X \times X \to \R$ is semialgebraic and defines a metric on $X$ (called the \emph{pancake metric}) which is bilipschitz equivalent to $d_i$.
Moreover, for all $\epsilon >0$, there exists a pancake decomposition (obtained by refinement), such that the underlying pancake metric satisfies
$$\forall x, y \in X, \ d_i(x,y) \leq d_P(x,y)  \leq (1+\epsilon)  d_i(x,y). $$
\end{theorem}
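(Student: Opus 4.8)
The plan is to prove the three assertions of Theorem~\ref{theorem:pancake_metric} in turn, treating the semialgebraicity, the metric axioms, the bilipschitz equivalence with $d_i$, and finally the refinement statement separately.

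\medskip

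\noindent\textbf{Semialgebraicity.} First I would argue that each restriction $d_i|_{X_{i_k}\times X_{i_k}}$ is semialgebraic. Indeed, since $X_{i_k}$ is LNE, there is a constant $K_{i_k}$ with $d_i|_{X_{i_k}} \leq K_{i_k}\, d_o|_{X_{i_k}}$, so the inner metric of the pancake is bilipschitz to the (obviously semialgebraic) outer metric; but bilipschitz equivalence alone does not give semialgebraicity, so instead I would invoke the fact that for a connected semialgebraic set the inner metric is realized by rectifiable semialgebraic paths of bounded complexity, and that the length function on such a finite-dimensional family of paths is semialgebraic — this is the content of the Kurdyka–Orro construction, and I would cite \cite{KurdykaOrro1997} for the precise statement that the inner distance on a single LNE pancake is semialgebraic. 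Granting this, $d_P$ is obtained from the finitely many semialgebraic functions $d_i|_{X_{i_k}\times X_{i_k}}$ by a finite combinatorial recipe: the set $Z_{x,y}$ has sequences of length $s \leq r+1$ (no pancake index can repeat, by the defining condition $X_{i_k}\cap\{z_1,\dots,z_s\}=\{z_k,z_{k+1}\}$), so $d_P$ is a finite infimum, over the finitely many choices of index sequences $(i_1,\dots,i_{s-1})$ and over the semialgebraic set of admissible intermediate points $(z_2,\dots,z_{s-1})$, of a sum of semialgebraic functions. Semialgebraicity is preserved by finite unions, finite sums, projections, and infima over semialgebraic families, so $d_P$ is semialgebraic.

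\medskip

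\noindent\textbf{Metric axioms and comparison with $d_i$.} Symmetry is immediate from reversibility of sequences, and the triangle inequality follows by concatenation: given admissible sequences realizing paths $x\to y$ and $y\to z$, I would subdivide the concatenation further so that no pancake is used on two non-adjacent links, which only decreases the sum. For positivity I would show the two-sided bound
\[
d_i(x,y) \;\leq\; d_P(x,y) \;\leq\; C\, d_i(x,y)
\]
for a constant $C$ depending on the pancake decomposition. The left inequality is the triangle inequality for $d_i$ applied to each term $d_i(z_k,z_{k+1})$, and in fact it shows $d_i \leq d_P$ already. For the right inequality, given a rectifiable path $\gamma$ from $x$ to $y$ nearly realizing $d_i(x,y)$, I would cut $\gamma$ into finitely many subarcs each lying in a single pancake — here one uses property (2) of the pancake decomposition together with a stratification/finiteness argument to bound the number of pieces by a constant independent of $\gamma$ — mark the cut points as $z_1,\dots,z_s$, and bound each $d_i(z_k,z_{k+1})$ by the length of the corresponding subarc of $\gamma$ (the inner distance \emph{inside} the pancake is at most the length of any path staying in it). This yields $d_P(x,y) \leq \mathrm{length}(\gamma) + \varepsilon'$, hence $d_P \leq d_i$ after taking infimum; combined with $d_i \leq d_P$ this already gives equality in the inner category. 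I expect the technical heart, and the main obstacle, to be precisely this bounded-cutting step: one must ensure that a length-minimizing (or nearly minimizing) path can be broken into a \emph{uniformly} bounded number of sub-pieces each confined to one LNE pancake, which requires controlling how such a path crosses the lower-dimensional overlaps $X_i\cap X_j$; this is where one genuinely uses that the decomposition is semialgebraic and condition (2) holds.

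\medskip

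\noindent\textbf{The refinement statement.} Finally, for the $(1+\epsilon)$ estimate I would use that one may refine a pancake decomposition arbitrarily — subdividing each $X_i$ into smaller LNE pancakes of small diameter, which is possible because LNE-ness passes to closed semialgebraic subsets after a further refinement, or more robustly because small pieces of any semialgebraic set become LNE after subdivision (this is exactly the inductive mechanism behind Theorem~\ref{theorem:pancake decomposition}). On pieces of sufficiently small ``Lipschitz size'' the inner metric of the ambient $X$ is approximated within a factor $1+\epsilon$ by the inner metric of the pancake containing a given short subarc, because the LNE constant of a shrinking family of rescaled pieces tends to $1$; chaining these local estimates along the minimizing sequence upgrades $d_i \leq d_P$ to $d_i \leq d_P \leq (1+\epsilon)\, d_i$. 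I would only sketch this last part, citing \cite{KurdykaOrro1997} for the quantitative refinement, since the argument is a routine but notation-heavy iteration of the cutting procedure above with diameters driven to zero.
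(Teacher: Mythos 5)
The paper itself offers no proof of this statement (it is quoted from Kurdyka--Orro), so I am comparing your proposal against the argument of the cited source. Your semialgebraicity step contains a genuine gap: the inner distance of a single LNE pancake is \emph{not} semialgebraic in general. A closed arc of the unit circle is a perfectly good pancake (it is LNE with constant $\pi/2$), yet its inner distance is the arc length $\arccos\langle x,y\rangle$, whose graph is not semialgebraic; so the ``precise statement'' you propose to cite does not exist, and the finite combinatorial reduction that follows has no semialgebraic input to work with. The actual mechanism is that the links of the chains are measured with the Euclidean distance $\norm{z_k-z_{k+1}}$ (this is how the displayed formula should be read); then $d_P$ is semialgebraic by Tarski--Seidenberg via exactly your (correct and useful) observation that the pancake indices cannot repeat, so $s\leq r+1$, and the LNE-ness of the pancakes plays no role in definability --- it is used only to compare $d_P$ with $d_i$.

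The second half of your argument should have set off an alarm: you derive $d_i\leq d_P\leq d_i$, i.e.\ $d_P=d_i$, and indeed with $d_i$-links this identity does hold (your cutting-plus-shortcutting extraction works, since shortcutting inside a repeated pancake only decreases $\sum d_i(z_k,z_{k+1})$ by the triangle inequality). But then the theorem would be asserting that $d_i$ itself is semialgebraic, which the circle refutes, and the $(1+\epsilon)$ refinement clause would be vacuous. This is conclusive evidence that the definition you are proving things about is not the one the theorem is about. With Euclidean links the logic inverts: the upper bound $d_P\leq d_i$ is the easy direction, while the lower bound $d_P\geq (1+\epsilon)^{-1}d_i$ is where the work lies --- each link satisfies only $\norm{z_k-z_{k+1}}\geq K_{i_k}^{-1}\,d_i(z_k,z_{k+1})$, so one needs pancakes whose LNE constants are at most $1+\epsilon$, and your closing claim that ``the LNE constant of a shrinking family of rescaled pieces tends to $1$'' is precisely the quantitative content that must be extracted from the construction of the refined decomposition; as written it is asserted rather than proved. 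The overall architecture of your proposal (finiteness of chains, cutting a near-geodesic into boundedly many pancake pieces, refinement) is the right one, but the two load-bearing steps are the two you delegate to the citation.
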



An important application of the result above is the solution by Birbrair and Mostowski of the embedding problem, which asks whether every compact connected semialgebraic set is inner bilipschitz equivalent to a LNE semialgebraic set:
 
\begin{theorem}[\cite{BirbrairMostowski2000}]
	\label{theorem:embedding_problem}
Let $X$ be a compact connected semialgebraic subset of $\R^n$. 
Then, for every $\epsilon >0$, there exists a semialgebraic set $X_{\epsilon} \subset \R^m$ such that:
\begin{enumerate}
\item  $X_{\epsilon} $ is semialgebraically bilipschitz equivalent to $X$ with respect to the inner metric;
\item $X_{\epsilon} $ is LNE; 
\item  the Hausdorff distance between $X$ and $X_{\epsilon}$  is less than $\epsilon$.
\end{enumerate}
\end{theorem}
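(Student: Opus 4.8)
The plan is to construct $X_\epsilon$ directly from a sufficiently fine pancake decomposition of $X$, using the pancake metric as a bridge. First I would invoke Theorem~\ref{theorem:pancake_metric} to fix, given $\epsilon>0$, a pancake decomposition $P=\{X_i\}_{i=1}^r$ whose associated pancake metric $d_P$ satisfies $d_i(x,y)\leq d_P(x,y)\leq (1+\epsilon')d_i(x,y)$ for a small auxiliary constant $\epsilon'$ to be chosen at the end in terms of $\epsilon$. The guiding idea is that each pancake $X_i$, being LNE by item~(1) of Theorem~\ref{theorem:pancake decomposition}, is already inner bilipschitz equivalent to a normally embedded piece; the whole difficulty is to glue these pieces together in a single ambient $\R^m$ in such a way that the outer metric of the glued object reproduces, up to a bilipschitz constant, the pancake metric $d_P$ — and hence the inner metric $d_i$ of $X$ — while keeping the Hausdorff distance to $X$ small.

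The key steps, in order, are as follows. (i) For each pancake $X_i$, realize it as a normally embedded set $X_i'$ in some $\R^{m_i}$ together with a semialgebraic bilipschitz homeomorphism $\psi_i\colon X_i\to X_i'$ for the inner metric; one can in fact keep $\psi_i$ close to the identity by composing with a rescaling, so that $X_i'$ stays within a small neighborhood of $X_i\times\{0\}$. (ii) Choose, for each $i$, a very large scaling factor $\lambda_i>0$ and embed the rescaled copy $\lambda_i X_i'$ along a fresh group of coordinates $\R^{m_i}\subset\R^m$, $m=\sum m_i$, while simultaneously keeping a faithful copy of the original $X\subset\R^n\subset\R^m$ in the first $n$ coordinates — so a point $x\in X_i$ is sent to $(x,\dots,\lambda_i\psi_i(x),\dots)$. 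The effect of the large $\lambda_i$ is that \emph{within} a single pancake the Euclidean distance in $\R^m$ is dominated by the $\lambda_i\psi_i$ term, hence is comparable to the inner distance $d_i^{X_i}$, which for an LNE set is comparable to the outer distance in $X_i$; whereas for points in \emph{different} pancakes the first $n$ coordinates force the distance to be at least their $\R^n$-distance. (iii) The delicate interpolation is at the overlaps $X_i\cap X_j$: one must arrange the construction (shrinking collars, partition-of-unity style damping of the $\lambda_i\psi_i$ coordinates near the boundary of $X_i$, exactly as in the original Birbrair–Mostowski argument) so that the glued set is a genuine semialgebraic subset and so that a chain of points realizing $d_P(x,y)$ — which passes from pancake to pancake through common points — corresponds to a path in $X_\epsilon$ whose length is comparable to $\sum_k d_i(z_k,z_{k+1})$. (iv) Finally, choose $\lambda_i$ decreasing to $0$ fast enough (and the collars thin enough) that the Hausdorff distance between $X_\epsilon$ and the copy of $X$ in the first $n$ coordinates is below $\epsilon$, and tune $\epsilon'$ so that the composite bilipschitz constant is within the prescribed tolerance; semialgebraicity of $d_i^{X_\epsilon}$ and of the homeomorphism $X\to X_\epsilon$ follows because every ingredient (pancakes, the maps $\psi_i$, the damping functions) is semialgebraic.

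The main obstacle, and the technical heart of the proof, is step~(iii): making the transition between adjacent pancakes simultaneously (a) geometrically controlled, so that the outer metric of $X_\epsilon$ does not develop short-cuts that would make it inner-but-not-outer comparable to something strictly smaller than $d_P$, and (b) compatible across \emph{all} pancakes at once, since a point of $X$ may lie in the intersection of several $X_i$ and the damping of the various $\lambda_i\psi_i$ coordinates must be orchestrated consistently on the lower-dimensional strata where pancakes meet. This is precisely where condition~(2) of the pancake decomposition, $\dim(X_i\cap X_j)<\min(\dim X_i,\dim X_j)$, is used: it guarantees that the overlaps are thin enough to be handled by a finite induction on dimension, collapsing the ambiguity one stratum at a time. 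The remaining estimates — that the resulting Euclidean metric on $X_\epsilon$ squeezes $d_P$ from both sides, and the Hausdorff bound — are then routine once the scaling parameters are chosen in the correct order of magnitude.
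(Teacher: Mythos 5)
A preliminary remark: the survey does not actually prove this theorem; it states it with a citation to Birbrair--Mostowski and only records that it is an application of the pancake metric (Theorem~\ref{theorem:pancake_metric}). So the comparison below is with the original argument. Your starting point --- a pancake decomposition together with the Kurdyka--Orro metric $d_P$ --- is the correct one, but the construction you build on it has two genuine problems. The first is an internal contradiction in the scaling: in step~(ii) you need the factors $\lambda_i$ to be \emph{very large} so that the new coordinates dominate the Euclidean distance inside a pancake, while in step~(iv) you need the same $\lambda_i$ to tend to $0$ so that the Hausdorff distance to $X$ is below $\epsilon$. These cannot both hold. In fact neither is needed: inside a single pancake nothing has to be corrected, since each $X_i$ is LNE and $d_i^{X_i}\geq d_i^X$, so $\norm{x-y}\geq K^{-1}d_i^X(x,y)$ there already; only pairs contained in no common pancake must be separated by the extra coordinates, and for those a \emph{small} factor suffices, because the theorem lets the LNE constant of $X_\epsilon$ blow up as $\epsilon\to 0$ and only asks the Hausdorff distance to be small.

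The second, more serious, problem is step~(iii), which you correctly identify as the heart of the matter and then resolve by appealing to ``exactly as in the original Birbrair--Mostowski argument'' --- circular in a blind proof. Moreover the mechanism you propose there does not do the job: damped copies of $\lambda_i\psi_i$ supported on $X_i$ and vanishing near its boundary separate $x\in X_i$ from $y\notin X_i$ by an amount governed by the distance of $x$ to the locus where the damping starts, which bears no relation to $d_i(x,y)$; a point $x$ close to $X_i\cap X_j$ but inner-far from $y$ receives essentially no separation, while $d_o(x,y)$ may still be tiny. The repair is to drop the local, damped coordinates and append \emph{globally defined} Lipschitz functions instead: set $f_j(x)=d_P(x,X_j)$, which is semialgebraic (by Tarski--Seidenberg, since $d_P$ is) and $1$-Lipschitz for $d_P$, hence for $d_i$ up to a constant, and put $\Phi(x)=\big(x,\delta f_1(x),\dots,\delta f_r(x)\big)$ with $\delta$ small. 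Then $\Phi$ is an inner bilipschitz homeomorphism onto its image (the coordinate projection is its inverse), the Hausdorff distance to $X\times\{0\}$ is at most $\delta\sqrt{r}\,\mathrm{diam}_P(X)<\epsilon$, and LNE-ness of $\Phi(X)$ follows from a dichotomy your proposal never isolates: if $x,y$ lie in a common pancake, use that pancake's LNE-ness; if not, pick $j$ with $x\in X_j$, $y\notin X_j$, and either $f_j(y)\geq c\,d_P(x,y)$, in which case the $j$-th added coordinate already separates $\Phi(x)$ from $\Phi(y)$ by $\delta c\,d_P(x,y)$, or $y$ is $d_P$-close to some $z\in X_j$, in which case $\norm{x-y}\geq\norm{x-z}-\norm{z-y}$ combined with the LNE-ness of $X_j$ gives the required lower bound. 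This dichotomy, not the gluing of rescaled pancakes, is the actual content of the theorem, and it also makes condition~(2) of Theorem~\ref{theorem:pancake decomposition} unnecessary for this particular application, contrary to the role you assign it.
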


\subsection{Characterization of LNE-ness via arcs} \label{subsec:arc criterion}

 we recall a  necessary and sufficient condition for the LNE-ness of a semialgebraic set which was proved by Birbrair and Mendes.
As in the previous subsection, the results stay true in the subanalytic or more generally polynomially bounded $o$-minimal setting (see \cite[Remark~2.3]{BirbrairMendes2018}).

\begin{definition} 
	Let $(X,0) \subset (\R^n,0)$ be a semialgebraic germ.  
	A \emph{real arc} on $(X,0)$ is the germ of a semialgebraic map $\delta \colon [0,\eta) \to X$ for some $\eta >0$, such that $\delta(0)=0$ and $\norm{\delta(t)}=t$ (see also Remark \ref{rk:parametrization}).\\
	When no risk confusion may arise, we will use the same notation for a real arc $\delta$ and for the germ $\big(\delta([0,\eta)),\delta(0)\big)$ of its parametrized image. 
\end{definition}

\begin{definition} \label{definition:inner_and_outer_contacts}
	Let $(X,0) \subset (\R^n,0)$ be a semialgebraic germ and let $\delta_1 \colon [0,\eta)  \to X$ and $\delta_2  \colon [0,\eta)  \to X$  be two real arcs on $X$. 
	The {\it outer contact} of $\delta_1$ and $\delta_2$ is defined to be infinity if $\delta_1=\delta_2$ and is otherwise the rational number $q_{o}=q_{o}(\delta_1,\delta_2)$ defined by
\[
\norm{\delta_1(t)-\delta_2(t)} = \Theta(t^{q_{o}}).
\]
	The {\it inner contact} of $\delta_1$ and $\delta_2$  is the rational number $q_{i}=q_{i}(\delta_1,\delta_2)$ defined by
\[
d_{i}\big(\delta_1(t),\delta_2(t)\big) = \Theta(t^{q_i}).
\]
\end{definition}

\begin{remark} 
	The existence and rationality of the inner contacts $q_i$ is a consequence of the fact that the inner metric is bilipschitz equivalent to the pancake metric (Theorem~\ref{theorem:pancake_metric}), which is semialgebraic.
\end{remark}

\begin{remark}\label{rk:parametrization} 
	The inner and outer contacts $q_{i}(\delta_1,\delta_2)$ and $q_{o}(\delta_1,\delta_2)$ can also be defined taking reparametrizations by \emph{real slices} of $\delta_1$ and $\delta_2$ as follows. 
	First note that if $\delta_1$ and $\delta_2$ have different tangent directions then $q_i(\delta_1,\delta_2)=q_o(\delta_1,\delta_2)=1$, so we may assume that they have the same tangent direction. 
	We can then choose coordinates $(x_1,\ldots,x_n)$ such that along the tangent half-line of $\delta_1$ and $\delta_2$ we have $x_1>0$ except at $0$. 
	For $j=1,2$, consider the reparametrization $\tilde{\delta}_j \colon [0,\eta) \to \R^n$ defined by $\tilde{\delta}_j(t) = \delta_j \cap \{x_1=t\}$. 
	Then we have $\norm{\tilde{\delta}_1(t)-\tilde{\delta}_2(t)} = \Theta(t^{{q_o}}) $ and $d_{i}\big(\tilde{\delta}_1(t),\tilde{\delta}_2(t)\big) = \Theta(t^{{q_i}})$.
	Indeed, this is an easy consequence of the following standard lemma:
	\begin{lemma}
		Let $B\subset \R^n$ be any closed compact convex neighborhood of $0$ in $\R^n$ and denote by $B_1$ is the unit ball of $\R^n$. 
		Let $\phi\colon B\to B_1$ be the homeomorphism which maps each ray
		from $0$ to $\partial B$ linearly to the ray with the same tangent, but of length $1$. 
		Then the map $\phi\colon B\to B_1$ is a bilipschitz homeomorphism.
	\end{lemma}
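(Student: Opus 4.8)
The plan is to write the radial rescaling $\phi$ explicitly in terms of the Minkowski gauge of $B$ and then to deduce the bilipschitz bounds from an elementary scale-invariant estimate. Fix constants $0<a\le b$ with $\overline{B}(0,a)\subseteq B\subseteq\overline{B}(0,b)$, which exist because $B$ is a compact convex neighbourhood of $0$, and let $\mu_B(x)=\inf\{t>0 : x/t\in B\}$ be the gauge of $B$. Convexity of $B$ makes $\mu_B$ subadditive; combined with positive $1$-homogeneity and the two inclusions this yields $\norm x/b\le\mu_B(x)\le\norm x/a$ and
\[
|\mu_B(x)-\mu_B(x')|\le\max\{\mu_B(x-x'),\,\mu_B(x'-x)\}\le\tfrac1a\,\norm{x-x'},
\]
so $\mu_B$ is $\tfrac1a$-Lipschitz on $\R^n$. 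A direct verification then gives the formulas $\phi(x)=\mu_B(x)\,\frac{x}{\norm x}$ for $x\neq 0$ and $\phi(0)=0$, with inverse $\phi^{-1}(y)=\frac{\norm y}{\mu_B(y)}\,y$ for $y\neq 0$ and $\phi^{-1}(0)=0$; both maps are continuous at the origin thanks to the two-sided bound on $\mu_B$.

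The core of the argument is the scale-invariant inequality
\[
\Bigl\lVert\tfrac{x}{\norm x}-\tfrac{x'}{\norm{x'}}\Bigr\rVert\le\frac{2\,\norm{x-x'}}{\max(\norm x,\norm{x'})}\qquad\text{for all }x,x'\neq 0,
\]
which follows from $\tfrac{x}{\norm x}-\tfrac{x'}{\norm{x'}}=\tfrac{x-x'}{\norm x}+x'\bigl(\tfrac1{\norm x}-\tfrac1{\norm{x'}}\bigr)$ and the reverse triangle inequality. Consequently, for any $0$-homogeneous function $g$ on $\R^n\setminus\{0\}$ that is $L$-Lipschitz on $S^{n-1}$ one has $|g(x)-g(x')|\,\norm{x'}\le 2L\,\norm{x-x'}$, and therefore, using the decomposition
\[
g(x)\,x-g(x')\,x'=g(x)\,(x-x')+\bigl(g(x)-g(x')\bigr)\,x',
\]
the map $x\mapsto g(x)\,x$ is $\bigl(\sup|g|+2L\bigr)$-Lipschitz (and the bound extends to pairs involving $0$ by continuity). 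It then remains to apply this twice: to $\phi$ with $g(x)=\mu_B(x)/\norm x$, which is $0$-homogeneous, takes values in $[1/b,1/a]$ and is $\tfrac1a$-Lipschitz on $S^{n-1}$ as a restriction of $\mu_B$; and to $\phi^{-1}$ with $g(y)=\norm y/\mu_B(y)$, which is $0$-homogeneous, takes values in $[a,b]$, and is Lipschitz on $S^{n-1}$ because $\mu_B$ is Lipschitz there and bounded below by $1/b$. This shows that $\phi$ is a bilipschitz homeomorphism.

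I expect the only genuinely delicate point to be the behaviour near the origin: a priori a radial rescaling could distort distances arbitrarily badly close to $0$, and what prevents this is that the rescaling factor depends only on the direction and varies Lipschitz-continuously in it, so that the scale-invariant estimate above takes over. This is also the single place where convexity of $B$ is used essentially, as it is precisely what guarantees that the gauge $\mu_B$ — equivalently the radial function of $\partial B$ — is Lipschitz.
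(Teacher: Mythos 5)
Your proof is correct. Note, however, that the paper offers no proof of this lemma at all: it is stated inside Remark~2.10 as ``a standard lemma'' and left to the reader, so there is no argument of the authors' to compare yours against. Your gauge-function approach is a clean and complete way to supply the missing proof: writing $\phi(x)=\mu_B(x)\,x/\norm{x}$ and $\phi^{-1}(y)=\bigl(\norm{y}/\mu_B(y)\bigr)y$ reduces everything to the single scale-invariant estimate $\bigl\lVert x/\norm{x}-x'/\norm{x'}\bigr\rVert\le 2\norm{x-x'}/\max(\norm{x},\norm{x'})$, and the two applications (to $g=\mu_B(x)/\norm{x}$ and $g=\norm{y}/\mu_B(y)$) are both justified by the two-sided bound $\norm{x}/b\le\mu_B(x)\le\norm{x}/a$ and the $\tfrac1a$-Lipschitz continuity of the gauge coming from subadditivity. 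The only point worth flagging is cosmetic: you use the possibly asymmetric bound $\max\{\mu_B(x-x'),\mu_B(x'-x)\}$ correctly (the lemma does not assume $B$ symmetric), and your identification of where convexity and the interior-point hypothesis enter is exactly right.
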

\end{remark}

We can now state the main result of this subsection, which is a criterion to determine if a closed semialgebraic germ is LNE using arcs and their contact orders.

\begin{theorem}[Arc criterion, \cite{BirbrairMendes2018}] 
		\label{theorem:criterion_arcs_original}
		\label{thm:arc criterion}
	Let $(X,0)\subset (\R^n,0)$ be a closed semialgebraic germ. 
	Then $(X,0)$ is LNE if and only if all pairs of real arcs $\delta_1$ and $\delta_2$ in $(X,0)$ satisfy $q_i(\delta_1,\delta_2) = q_o(\delta_1,\delta_2)$.
\end{theorem}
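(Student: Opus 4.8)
The plan is to prove the two implications separately. The forward implication is essentially immediate from the definitions, whereas the converse uses the semialgebraic machinery recalled above, and in particular the pancake metric of Theorem~\ref{theorem:pancake_metric}.

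\emph{From LNE to equal contacts.} Suppose $(X,0)$ is LNE, say $\tfrac1K\,d_i\le d_o\le d_i$ on a neighbourhood $U$ of $0$. If $\delta_1,\delta_2$ are two real arcs on $(X,0)$, then for $t$ small enough their images lie in $U$, so
\[
\tfrac1K\,d_i\big(\delta_1(t),\delta_2(t)\big)\ \le\ d_o\big(\delta_1(t),\delta_2(t)\big)\ \le\ d_i\big(\delta_1(t),\delta_2(t)\big),
\]
hence $d_o(\delta_1(t),\delta_2(t))=\Theta\big(d_i(\delta_1(t),\delta_2(t))\big)$. Thus either $\delta_1=\delta_2$, in which case $q_i=q_o=\infty$, or $\Theta(t^{q_o})=\Theta(t^{q_i})$, which forces $q_i=q_o$.

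\emph{From equal contacts to LNE.} We argue by contraposition: assuming $(X,0)$ is not LNE, we produce a pair of real arcs with $q_i\neq q_o$. Although $d_i$ itself is not semialgebraic, Theorem~\ref{theorem:pancake_metric} furnishes a semialgebraic metric $d_P$ with $d_P=\Theta(d_i)$, so that the function $g=d_o/d_P$ on $(X\times X)\setminus\Delta$ (where $\Delta$ is the diagonal) is semialgebraic. By definition, $(X,0)$ is LNE precisely when $g$ is bounded below by a positive constant near $(0,0)$; if not, then $(0,0,0)$ lies in the closure of the graph of $g$ in $\R^n\times\R^n\times\R$, and the semialgebraic curve selection lemma yields a semialgebraic arc $u\mapsto(\delta_1(u),\delta_2(u))$ in $(X\times X)\setminus\Delta$ tending to $(0,0)$ along which $d_P(\delta_1(u),\delta_2(u))/d_o(\delta_1(u),\delta_2(u))\to\infty$, hence also $d_i(\delta_1(u),\delta_2(u))/d_o(\delta_1(u),\delta_2(u))\to\infty$. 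It remains to turn $\delta_1,\delta_2$ into genuine real arcs in the synchronised position of Definition~\ref{definition:inner_and_outer_contacts} while keeping track of the blow-up of the ratio. By tameness $\norm{\delta_1(u)}$ is monotone near $0$, and up to swapping the two arcs we may assume $\norm{\delta_2(u)}\le\norm{\delta_1(u)}$; reparametrizing by $t=\norm{\delta_1(u)}$ we get $\norm{\delta_1(t)}=t$ and $\norm{\delta_2(t)}\le t$. Now one invokes the standard fact that the length of a semialgebraic arc joining $0$ to a point $x$ on it is $\Theta(\norm{x})$, so that $d_i(0,\delta_j(t))=\Theta(\norm{\delta_j(t)})$: granting this, whenever $\delta_1$ and $\delta_2$ fail to be synchronised to leading order — that is, whenever they have distinct tangent half-lines at $0$, or $\norm{\delta_2(t)}/t$ does not tend to $1$ — one checks that both $d_o(\delta_1(t),\delta_2(t))$ and $d_i(\delta_1(t),\delta_2(t))$ are $\Theta(t)$, contradicting the divergence of their ratio. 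Hence $\delta_1$ and $\delta_2$ share a tangent half-line and $\norm{\delta_2(t)}/t\to 1$; choosing coordinates with $x_1>0$ along that half-line and passing to the slice reparametrizations $x_1=t$ of Remark~\ref{rk:parametrization} (which affects neither the contact orders of the pair nor, up to a bounded factor, the ratio along the arc), we obtain $d_o(\delta_1(t),\delta_2(t))=\Theta(t^{q_o})$ and $d_i(\delta_1(t),\delta_2(t))=\Theta(t^{q_i})$ with $q_i,q_o$ now the genuine inner and outer contacts of the pair. Since $d_o\le d_i$ forces $q_i\le q_o$ while the ratio $\Theta(t^{q_i-q_o})$ diverges, we conclude $q_i<q_o$, which finishes the contrapositive.

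\emph{Where the difficulty lies.} The conceptual half — reducing to a semialgebraic problem through $d_P$ and selecting a bad arc — is routine. The delicate point is the last part: arranging the curve-selected pair into the canonical synchronised position while certifying that the inner/outer ratio genuinely degenerates for that synchronisation. The technical engine there is the asymptotic linearity of arc length along semialgebraic arcs (equivalently, the comparability of $d_i(0,\cdot)$ with the Euclidean norm), which is precisely what rules out the various degenerate configurations.
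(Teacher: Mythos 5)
Your argument is correct and follows essentially the route the paper attributes to \cite{BirbrairMendes2018}: the forward implication is immediate from the definitions, and the converse replaces $d_i$ by the semialgebraic pancake metric of Theorem~\ref{theorem:pancake_metric} so that the Curve Selection Lemma can be applied to extract a degenerating pair of arcs. The resynchronization step you rightly flag as the delicate point does go through, since the discrepancy introduced by re\-parametrizing $\delta_2$ is bounded by the length of the sub-arc of $\delta_2$ between the two parametrizations, hence by $\Theta\big(\bigl|\norm{\delta_2(t)}-\norm{\delta_1(t)}\bigr|\big)=O\big(d_o(\delta_1(t),\delta_2(t))\big)$, which perturbs neither the inner nor the outer asymptotics enough to restore the boundedness of the ratio.
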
  

The proof of this theorem is based on the Curve Selection Lemma.
Since the latter only applies to semialgebraic metrics, the semialgebraicity of the pancake metric and Theorem~\ref{theorem:pancake_metric} play again a fundamental role.

\begin{example} \label{example:real surface} 
	A straightforward application of the arc criterion shows that the real surface $S$ in $\R^3$ defined by the equation $x^2+y^2-z^3=0$ is LNE. 
	See also Example~\ref{example:real surface2}.
\end{example}

The criterion given in Theorem~\ref{theorem:criterion_arcs_original} is difficult to use effectively in practice since it requires to compute the inner and outer contact orders of an immense amount of pairs of arcs.
In Section~\ref{sec:surfaces} we state an analogous criterion for complex surface germs where the number of pairs of arcs to be tested is reduced drastically to just finitely many pairs. 
This makes the criterion much more efficient to prove LNE-ness and enables one to obtain several infinite families of LNE complex surface germs with isolated singularities.

\subsection{Characterization of LNE-ness via the links}

Recall that the \emph{link} of a $d$-dimensional subanalytic germ $(X,0)\subset \R^n$, which is defined by embedding $(X,0)$ in a suitable smooth germ $(\mathbb{C}^N,0)$ and intersecting it with a small sphere, is, up to homeomorphism, a well defined real $(2d-1)$-dimensional oriented pseudo-manifold (a smooth manifold if $(X,0)$ has isolated singularities) which determines and is determined by the homeomorphism class of the germ $(X,0)$.
In this subsection we discuss the relation between a germ being LNE and its link being LNE.
One implication is always satisfied:

\begin{lemma} \label{lemma:LNE_implies_link_is_LNE}
	Let $(X,0)$ be a subanalytic germ in $\R^n$ such that $(X \setminus\{0\},0)$ is connected. 
	Then, if $(X,0)$ is LNE, so is its link. 
\end{lemma}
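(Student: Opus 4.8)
The plan is to deduce the LNE-ness of the link $L = X \cap S_\epsilon^{n-1}$ from that of the germ $(X,0)$ by comparing the two metrics carried by $L$ — its own intrinsic inner and outer metrics — with the inner and outer metrics that $L$ inherits as a subset of the germ $(X,0)$. The key point is that on a fixed small sphere $S_\epsilon^{n-1}$, the outer metric of $L$ (as a subanalytic subset of $\R^n$) is bilipschitz equivalent to the outer metric of $L$ inside the ambient $\R^n$, and the latter differs from the restriction of $d_o^X$ only by the ambient geometry of the sphere, which contributes bounded distortion since we are away from the apex. So the only real work is the inner metric.

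First I would observe that for the outer metrics there is nothing to prove: the outer metric on $L$ is literally the restriction of the Euclidean metric of $\R^n$, which is exactly the restriction of $d_o^X$. So it suffices to show that the inner metric $d_i^L$ of the link is bilipschitz equivalent to the restriction $d_i^X|_{L\times L}$ of the inner metric of the germ; once this is established, LNE-ness of $(X,0)$ gives $d_i^X \leq K\, d_o^X$ on $L$, hence $d_i^L \leq K'\, d_o^L$, which is LNE-ness of $L$. One inequality is immediate: any rectifiable path inside $L \subset X$ is in particular a path in $X$, so $d_i^X|_L \leq d_i^L$. For the reverse inequality I would use the standard cone-structure of subanalytic germs: for $\epsilon$ small enough, $(X,0)$ is homeomorphic to the cone over $L$, and more precisely one can choose a subanalytic ``radial'' vector field whose flow retracts $X \cap B_\epsilon$ onto $L$ along arcs, with the property that the retraction $\rho\colon X\cap B_\epsilon \setminus\{0\} \to L$ (composed with normalization of the radius) is locally Lipschitz for the inner metric with constant controlled uniformly on the region $\{t/2 \le \|x\| \le 2t\}$ — this is where connectedness of $(X\setminus\{0\},0)$, assumed in the statement, enters, guaranteeing that $L$ is connected so that $d_i^L$ is finite and the retraction stays within a single component.

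The main step is then: given $x,y \in L$ and a path $\gamma$ in $X$ from $x$ to $y$ realizing $d_i^X(x,y)$ up to a factor $(1+\delta)$, push $\gamma$ forward by $\rho$ to obtain a path $\rho\circ\gamma$ in $L$ from $x$ to $y$, and bound its length. The path $\gamma$ may dip toward the origin, but since $x,y$ are at distance $\epsilon$ from $0$ and $d_i^X(x,y) \le \mathrm{length}(\gamma)$, the portion of $\gamma$ inside any sphere of radius $r$ has controlled length, and the Lipschitz constant of $\rho$ on the shell of radius $r$ scales like $\epsilon/r$ against a path of length $\lesssim r$, so the contributions telescope to a bound of the form $\mathrm{length}(\rho\circ\gamma) \leq C\,\mathrm{length}(\gamma)$; hence $d_i^L(x,y) \le C\, d_i^X(x,y)$. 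I expect the delicate point to be making this pushforward estimate rigorous: one needs a subanalytic trivialization of the germ that is bilipschitz for the inner metric on each shell with constants independent of the shell, which is exactly the kind of statement furnished by the existence of a pancake decomposition together with the local conic structure (it can also be extracted from \cite[Proposition~7.2.13]{Pichon2020} and its proof). Alternatively, and perhaps more cleanly, one can phrase the whole argument using the arc criterion (Theorem~\ref{thm:arc criterion}): a pair of arcs in $L$ gives, after coning, a pair of arcs in $(X,0)$ with the same inner and outer contact orders up to the shift coming from reparametrizing by the radial coordinate, so $q_i^L = q_o^L$ follows from $q_i^X = q_o^X$. I would present the cone-retraction argument as the main line and mention the arc-criterion reformulation as a remark.
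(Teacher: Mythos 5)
Your overall strategy is the same as the one the paper relies on: the outer metric of the link $L_t=X\cap S^{n-1}_t$ is the restriction of the ambient one, so everything reduces to the comparison $d_i^{L_t}\leq C\, d_i^{X}|_{L_t\times L_t}$ with $C$ independent of $t$, which is precisely the ``fact'' invoked (without proof) in the text following the lemma; the trivial inequality $d_i^{X}|_{L_t}\leq d_i^{L_t}$ and the deduction of LNE-ness of the link from LNE-ness of the germ are exactly as you write them.

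The genuine gap is in your justification of that reverse comparison. First, the radial retraction $x\mapsto \epsilon x/\norm{x}$ does not map $X$ into $X$ (the germ is not a cone), so the map $\rho$ you push $\gamma$ forward by must come from a subanalytic trivialization of $X$ over the radial coordinate which is inner-Lipschitz on each shell with the stated constant; you acknowledge this, but it is the whole content of the statement rather than a technical footnote. Second, even granting such a $\rho$ with inner Lipschitz constant $\Theta(\epsilon/r)$ on the shell of radius $r$, the telescoping does not close: a path $\gamma$ between $x,y\in L_\epsilon$ realizing $d_i^X(x,y)$ up to a factor $(1+\delta)$ has total length $\Theta(\epsilon)$ but may descend to an arbitrarily small radius $r_{\min}$; its portion in the dyadic shell of radius $r$ has length at least $\Theta(r)$ (it crosses the shell twice) but is not bounded above by $O(r)$, and summing the bounds $(\epsilon/r)\cdot \ell_r$ over the shells between $r_{\min}$ and $\epsilon$ gives at best $\Theta\big(\epsilon\log(\epsilon/r_{\min})\big)$, which is not $O(\mathrm{length}(\gamma))$ uniformly in $r_{\min}$. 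To close the argument one must either show that near-geodesics between points of $L_\epsilon$ can be chosen to stay in a fixed shell $\{c\epsilon\leq\norm{x}\leq\epsilon\}$, or invoke a genuine Lipschitz conic-structure statement such as \cite[Corollary 2.2]{Valette2007}; this is essentially where the hard direction of Theorem~\ref{thm:LLNE} does its work. Your arc-criterion alternative suffers from the same problem: an arc of $L_\epsilon$ based at a point $p$ does not ``cone'' to an arc of $(X,0)$ based at $0$, again because $X$ is not a cone, so reducing $q_i^L=q_o^L$ to $q_i^X=q_o^X$ still requires the very comparison of inner metrics that is at issue.
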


This is a consequence of the fact that, whenever the link of $(X,0)$ is connected, given two real arcs $\delta_1$ and $\delta_2$ as in Definition~\ref{definition:inner_and_outer_contacts}, their inner contact can be computed as the asymptotic of the inner distances between the points $\delta_1(t)$ and $\delta_2(t)$ on the representative $X\cap \{||x||=t\}$ of the link of $(X,0)$.
The converse implication is only true in some special cases, such as 
for conical subset of $\R^n$, as treated by Kerner, Pedersen, and Ruas:

\begin{proposition}[{\cite[Proposition 2.8]{KernerPedersenRuas2018}}]  \label{prop:cone} 
	Let $S^{n-1}$ be the unit sphere centered at the origin of $\R^n$, let $M$ be a compact subset of  $S^{n-1}$, and let $X= C(M) \subset \R^n$ be the cone over $M$, that is the union of the half-lines with origin $0$ and passing through points of $M$. 
	Then $X$ is LNE if and only if $M$ is LNE (as a subset of $\R^n$).
\end{proposition}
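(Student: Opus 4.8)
The plan is to prove both implications via the arc criterion of Theorem~\ref{thm:arc criterion}, exploiting the conical structure to relate arcs on $X=C(M)$ to arcs on $M$. Write a point of $X \setminus \{0\}$ as $r\cdot u$ with $r>0$ and $u \in M$, so that the radial and spherical directions decouple. The key observation is that the outer (Euclidean) metric on $X$ and the outer metric on $M \subset S^{n-1}$ are related by the standard formula for the distance in a Euclidean cone: for points $r_1 u_1$ and $r_2 u_2$ one has $\norm{r_1 u_1 - r_2 u_2}^2 = (r_1-r_2)^2 + 2r_1 r_2\bigl(1-\cos\theta\bigr)$, where $\theta$ is the angle between $u_1$ and $u_2$, and $1-\cos\theta = \Theta(\norm{u_1-u_2}^2)$ for $u_i$ on the sphere. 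The analogous statement holds for the inner metric, since a rectifiable path in $X=C(M)$ decomposes into its radial and spherical parts and the inner length in the cone is controlled by $\sqrt{(\text{radial displacement})^2 + (\text{inner length in } M)^2}$ type estimates (this is the metric cone construction; see also Example~2 in the excerpt, where it is already noted that $C(Y,0)$ is LNE when $Y$ is).

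The main step is then the following dictionary between arcs. Given two real arcs $\delta_1,\delta_2$ on $(X,0)$ with the same tangent direction, write $\delta_j(t) = r_j(t)\, u_j(t)$ with $r_j(t) = \norm{\delta_j(t)}$ and $u_j(t) \in M$. After reparametrizing, I would argue that the spherical parts $u_1(t), u_2(t)$ behave like arcs on $M$ (possibly after passing to a common radial parameter, using the bilipschitz reparametrization lemma of Remark~\ref{rk:parametrization}), and that both the inner and outer contact orders $q_i(\delta_1,\delta_2)$, $q_o(\delta_1,\delta_2)$ are governed, to leading order, by the corresponding contact orders of $u_1$ and $u_2$ as arcs on $M$ — unless the arcs are separated already at the radial level, in which case $q_i = q_o = 1$ trivially and there is nothing to check. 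Concretely, from the cone distance formula, $q_o(\delta_1,\delta_2) = \min\bigl(1, 1 + q_o^M(u_1,u_2) - 1\bigr)$-type expressions show that $q_o(\delta_1,\delta_2) = q_o^M(u_1,u_2)$ whenever the latter is $>1$ and similarly $q_i(\delta_1,\delta_2) = q_i^M(u_1,u_2)$, where the superscript $M$ denotes contact orders computed inside $M$. (Here one must be slightly careful that $M$, sitting on the unit sphere, is not a germ at $0$, but the contact orders are still well defined by comparing the two spherical arcs as $t\to 0$, and the arc criterion localizes: being LNE as a compact set is being LNE at each point.)

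Granting this dictionary, both directions follow. If $M$ is LNE, then $q_i^M(u_1,u_2) = q_o^M(u_1,u_2)$ for all pairs of arcs on $M$, hence $q_i(\delta_1,\delta_2) = q_o(\delta_1,\delta_2)$ for all pairs of arcs on $X$, so $X$ is LNE by Theorem~\ref{thm:arc criterion}. Conversely, if $X$ is LNE, then given any pair of arcs $v_1, v_2$ on $M$ we form the radial cone arcs $\delta_j(t) = t\, v_j(t/\|v_j(t)\|$-normalized$)$ — more simply, $\delta_j(t) = t\, v_j(t)$ if $v_j$ is parametrized so that $v_j(t)\in M$ — and the equality of contact orders on $X$ forces the equality on $M$, so $M$ is LNE. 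I expect the main obstacle to be the careful bookkeeping in the reparametrization step: the two arcs $\delta_1,\delta_2$ need not have proportional radial speeds $r_1(t), r_2(t)$, so one must either reparametrize both by a common radial slice (as in Remark~\ref{rk:parametrization}) and check that this does not alter the contact orders, or directly estimate the cone distances with $r_1(t) \ne r_2(t)$ and verify that any discrepancy in radii only contributes contact order $1$, hence is dominated by the spherical contribution whenever the latter is nontrivial. Once that technical point is pinned down, the rest is a direct translation through the arc criterion.
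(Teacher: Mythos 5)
Your route is genuinely different from the one the paper points to: the proof of \cite[Proposition 2.8]{KernerPedersenRuas2018} sketched after the proposition proceeds by \emph{direct computation} of the inner and outer distances between arbitrary pairs of points $r_1u_1$, $r_2u_2$ of $C(M)$, using exactly the two formulas you write down, namely $\norm{r_1u_1-r_2u_2}^2=(r_1-r_2)^2+2r_1r_2(1-\cos\theta)$ with $1-\cos\theta=\tfrac{1}{2}\norm{u_1-u_2}^2$, together with the metric-cone formula $d_i(r_1u_1,r_2u_2)^2=(r_1-r_2)^2+2r_1r_2\bigl(1-\cos\bigl(\min(d_i^M(u_1,u_2),\pi)\bigr)\bigr)$. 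You instead funnel everything through the arc criterion of Theorem~\ref{thm:arc criterion}. This buys nothing and costs generality: the arc criterion is a statement about closed \emph{semialgebraic} (or subanalytic) germs, resting on the Curve Selection Lemma and the semialgebraicity of the pancake metric, whereas the proposition is stated for an \emph{arbitrary} compact $M\subset S^{n-1}$. As written, your argument therefore only proves a weaker statement; the elementary distance computation is both simpler and strictly more general here.

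Even within the subanalytic setting, the ``dictionary'' step is where your write-up is shakiest, and it is avoidable. Since real arcs on $(X,0)$ are normalized by $\norm{\delta_j(t)}=t$, you automatically have $\delta_j(t)=t\,u_j(t)$ with $u_j(t)\in M$, so $d_o(\delta_1(t),\delta_2(t))=t\norm{u_1(t)-u_2(t)}$ and $d_i(\delta_1(t),\delta_2(t))=\Theta\bigl(t\min(d_i^M(u_1(t),u_2(t)),1)\bigr)$. If $M$ is LNE, applying $d_i^M\leq K\,d_o^M$ to the pair of \emph{points} $u_1(t),u_2(t)$ for each fixed $t$ immediately gives $q_i=q_o$; there is no need to regard $u_1,u_2$ as arcs on $M$, to reparametrize them, or to invoke the expression $q_o(\delta_1,\delta_2)=\min(1,1+q_o^M-1)$, which as stated does not parse (the relation between cone contact orders and spherical contact orders also involves the rate at which $u_j(t)$ approaches its limit point, which your formula ignores). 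Your converse direction via the radial arcs $\delta_j(t)=t\,v_j(t)$ is sound where the criterion applies, but again the pointwise version --- compare $u,v\in M$ at radius $1$ and absorb the case $d_i^M(u,v)\geq 1$ by compactness --- is shorter and requires no regularity of $M$.
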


The proof is obtained by performing direct computations of inner and outer distance between points inside $C(M)$.  

\begin{remark}
In the case where $M$ does not intersect the meridian sphere $S^{n-2} = S^{n-1} \cap  \R^{n-1}\times \{0\} \subset \R^n$, then $X= C(M)$ is also the cone $C(N)$ over the compact set $N = C(M) \cap \R^{n-1}\times \{\pm1\}$, and the map $\phi \colon M \to N$ sending a point $x$ of $M$ to the point of the half-line through $x$ which intersects $\R^{n-1}\times \{\pm1\}$ realizes a bilipschitz homeomorphism for the outer metric. 
Therefore, the LNE-ness of $M$ is equivalent to that of $N$ in this case.
\end{remark}

\begin{example}
	Consider the cone $C(N)$ in $\R^3$ over the union of the two circles
\[
N = \big\{(x,y,1) \in \R^3 \,\big|\, \big((x-1)^2+y^2-1\big) \big((x+1)^2+y^2-1\big)=0  \big\}.
\] 
	Then $C(N)$ is not LNE since $N$ is not LNE at the intersection point $q=(0,0,1)$ of the two circles. 
	Indeed, the two arcs $p_1(t) = (-1+\sqrt{1-t^2})$ and $p_2(t) = (1-\sqrt{1-t^2})$ on $(N,q)$ satisfy $q_o(p_1,p_2)={3}/{2}\neq q_i(p_1,p_2) = 1$.
\end{example}

In \cite{MendesSampaio2021}, Mendes and Sampaio proved a broad generalization of Proposition~\ref{prop:cone} which provides a characterization of LNE subanalytic germs via their links. 
This result was further generalized by Nguyen in \cite{Nguyen2021} to any definable set in a $o$-minimal structure (not necessarily polynomially bounded). 
We state here this most general version.

\begin{theorem} \label{thm:LLNE} \cite{MendesSampaio2021, Nguyen2021}   
	Let $(X,0)$ be a definable germ in $(\R^n,0)$ and let $\rho \colon (X,0) \to (\R,0)$ be the germ of a continuous definable function such that $\rho(x)=\Theta(\norm{x})$. 
	Suppose that $(X \setminus\{0\},0)$ is connected. 
	Then the following statements are equivalent:
\begin{enumerate}
	\item $(X,0)$ is LNE;
	\item \label{eq:LLNE} There exist real numbers $r_0>0$ and $C>0$ such that, for every $r\in (0,r_0]$, the set $X_r = \rho^{-1}(r) \cap X$ is LNE with Lipschitz constant bounded by $C$. 
\end{enumerate}
\end{theorem}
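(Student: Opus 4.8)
The plan is to deduce this link-characterization from the arc criterion (Theorem~\ref{thm:arc criterion}), after reducing to the polynomially bounded case. The implication $(1)\Rightarrow(2)$ is essentially a uniform version of Lemma~\ref{lemma:LNE_implies_link_is_LNE}: if $(X,0)$ is LNE with constant $K$, one checks that each slice $X_r=\rho^{-1}(r)\cap X$ is LNE with a constant depending only on $K$ and on the constants hidden in $\rho(x)=\Theta(\norm{x})$, but \emph{not} on $r$; the key point is that two points lying on the same slice $X_r$ can be joined inside $X$ by a path whose length is controlled, up to a bounded factor, by a path staying near that slice, because $\rho$ is comparable to $\norm{\cdot}$ and $X$ is, up to bilipschitz equivalence, a cone-like germ near $0$. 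So the real content is the converse.

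For $(2)\Rightarrow(1)$, I would argue by contraposition using the arc criterion. Suppose $(X,0)$ is not LNE; then there exist two real arcs $\delta_1,\delta_2$ on $(X,0)$ with $q_i(\delta_1,\delta_2)=q>q_o(\delta_1,\delta_2)=:q_o$ (the inequality $q_i\ge q_o$ always holds since $d_o\le d_i$). Reparametrize both arcs by the level sets of $\rho$ rather than by $\norm{\cdot}$; since $\rho=\Theta(\norm{\cdot})$ and, by the standard lemma in Remark~\ref{rk:parametrization}, slicing by a reasonable function changes distances only by a bilipschitz factor, the contact orders are unchanged: along the reparametrized arcs $\tilde\delta_j$ (now with $\rho(\tilde\delta_j(r))=r$) one still has $\norm{\tilde\delta_1(r)-\tilde\delta_2(r)}=\Theta(r^{q_o})$ and $d_i^X\big(\tilde\delta_1(r),\tilde\delta_2(r)\big)=\Theta(r^{q})$. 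Now $\tilde\delta_1(r),\tilde\delta_2(r)\in X_r$, so the outer distance computed in $X_r$ equals the one in $\R^n$, namely $\Theta(r^{q_o})$. To contradict (2) I must show that the inner distance inside $X_r$ between these two points is \emph{also} $\Theta(r^{q_o})$, i.e. comparable to the outer distance, with the implied constants uniform in $r$: for then $X_r$ would fail to be LNE with a uniform constant, since we would need $d_i^{X_r}\big(\tilde\delta_1(r),\tilde\delta_2(r)\big)\le C\, d_o\big(\tilde\delta_1(r),\tilde\delta_2(r)\big)$ but the genuine obstruction lives at scale $r^{q}\gg r^{q_o}$. Wait — that is backwards: inside a single slice the inner distance is at most the ambient inner distance of $X$, which is only $\Theta(r^q)$, too big.

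So the correct strategy is the reverse: I should produce, from the failure of LNE-ness of $(X,0)$, a failure of uniform LNE-ness of the slices by exhibiting arcs \emph{within} the family of links. Concretely: the germ $(X,0)$ is, up to inner bilipschitz equivalence, a mapping-cylinder-type neighborhood foliated by the slices $X_r$; a pair of arcs $\delta_1,\delta_2$ with $q_i>q_o$ gives, for each small $r$, a pair of points on $X_r$ whose $X_r$-inner distance $d_i^{X_r}(r)$ satisfies $d_o(r)=\Theta(r^{q_o})$ and $d_i^{X_r}(r)=\Theta(r^{q_i'})$ for some exponent $q_i'$ with $q_o<q_i'\le q_i$ — the left inequality being exactly what one extracts from the fact that a shortcut realizing the ambient inner contact $q_i$ must dip to radius $\Theta(r^{q_i/q_o}\cdot\text{something})<r$ and hence leave $X_r$, so that staying inside $X_r$ is strictly worse than the outer distance. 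Making this quantitative is the heart of the matter: one uses the semialgebraic (polynomially bounded) structure and the Curve Selection Lemma applied to the function $(x,y,r)\mapsto d_i^{X_r}(x,y)/d_o(x,y)$ on the incidence variety $\{(x,y,r): x,y\in X_r\}$ to see that if this ratio is bounded for all $r$ by a single constant, then no pair of arcs of $(X,0)$ can have $q_i>q_o$, i.e. $(X,0)$ is LNE.

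The main obstacle is precisely this uniform comparison between the intrinsic geometry of an individual slice $X_r$ and the geometry of the total space $(X,0)$: one must show that a bound on the inner/outer ratio that is uniform across the slices is equivalent to the arc-criterion equality $q_i=q_o$ for arcs of the germ. This requires (a) upgrading the pointwise link computation of Lemma~\ref{lemma:LNE_implies_link_is_LNE} to a statement with constants independent of $r$, which in turn rests on the polynomially bounded $o$-minimal structure guaranteeing finitely many "types" of local behavior as $r\to0$ (here the reduction via Theorem~\ref{theorem:pancake_metric} to a semialgebraic metric, or Nguyen's extension, is essential), and (b) handling the non-isolated-singularity case, where the slices $X_r$ are pseudo-manifolds rather than manifolds, so that "LNE with uniform constant" for the family must be formulated and proved using definable triviality of the family $\{X_r\}$ over $(0,r_0]$ rather than any smoothness. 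Once definable (Hardt) triviality of the $\rho$-fibration is in hand, the family $\{(X_r,\text{inner metric})\}$ is, up to uniformly bilipschitz rescaling, a single metric space varying tamely, and the equivalence with the arc criterion for $(X,0)$ follows by the Curve Selection Lemma as in the proof of Theorem~\ref{theorem:criterion_arcs_original}.
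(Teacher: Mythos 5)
There is a genuine error at the heart of your argument for $(2)\Rightarrow(1)$: both of the key inequalities are reversed. First, since $d_o\le d_i$ and $t\mapsto t^{q}$ is \emph{decreasing} in $q$ for $0<t<1$, the relation $\Theta(t^{q_o})=d_o\le d_i=\Theta(t^{q_i})$ forces $q_o\ge q_i$, not $q_i\ge q_o$ as you assert; the failure of LNE-ness is witnessed by pairs of arcs with $q_i<q_o$ (as in the cusp of Example~\ref{ex:non LNE}, where $q_o=3/2>q_i=1$). Second, your claim that ``inside a single slice the inner distance is at most the ambient inner distance of $X$'' is also backwards: since $X_r\subset X$, every rectifiable path in $X_r$ is a path in $X$, so the infimum defining the inner distance of $X_r$ is taken over a smaller set of paths and $d_i^{X_r}\ge d_i^{X}$. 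These two sign errors compound, send you down the unnecessary ``shortcut dipping below radius $r$'' detour, and prevent you from reaching the actual argument, which is short once the inequalities point the right way: after reparametrizing two arcs by the levels of $\rho$ (harmless, by the bilipschitz reparametrization of Remark~\ref{rk:parametrization} together with Valette's normalization producing a bilipschitz $h$ with $\norm{h(x)}=\rho(x)$, which is the ingredient your proposal omits), hypothesis~(2) gives
\[
d_i^{X}\big(\delta_1(r),\delta_2(r)\big)\le d_i^{X_r}\big(\delta_1(r),\delta_2(r)\big)\le C\, d_o\big(\delta_1(r),\delta_2(r)\big),
\]
hence $q_i\ge q_o$, hence $q_i=q_o$, and the arc criterion (Theorem~\ref{thm:arc criterion}, via the Curve Selection Lemma) yields LNE-ness. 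This is the route indicated in the text and in \cite{MendesSampaio2021}.

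Your sketch of $(1)\Rightarrow(2)$ does identify the genuine difficulty --- a shortest path in $X$ between two points of $X_r$ may leave $X_r$, and one must push it back onto the slice with a length increase that is uniform in $r$ --- but it remains a wave of the hand; note that this is exactly the direction where the elementary inequality $d_i^{X_r}\ge d_i^{X}$ points the wrong way and gives nothing, so the uniform comparison must come from the Curve Selection Lemma applied to a definable incidence family (together with the connectedness of $X\setminus\{0\}$ and the $o$-minimal structure), as in the proof of Lemma~\ref{lemma:LNE_implies_link_is_LNE} and its quantitative refinement in \cite{MendesSampaio2021,Nguyen2021}.
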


\begin{remark}
	Condition \eqref{eq:LLNE} is stated in \cite{MendesSampaio2021} in the case where the function $\rho$ equals the distance to the origin.
	In that case, $X_r = S^{n-1}_r \cap X$ is the link of $(X,0)$ at distance $r$ and a germ $(X,0)$ satisfying condition~\eqref{eq:LLNE} is said to be \emph{link-LNE} (or simply \emph{LLNE}).
\end{remark}

The proof of Theorem~\ref{thm:LLNE} in the case where $(X,0)$ and $\rho$ are subanalytic is based on the Curve Selection Lemma, used in a similar way as in the proof of Theorem \ref{theorem:criterion_arcs_original} given in \cite{BirbrairMendes2018}, and on a result of Valette \cite[Corollary 2.2]{Valette2007} which states the existence of a bilipschitz homeomorphism $h \colon (X,0) \to (X,0)$ such that for all $x$ in a neighborhood of the origin we have $\norm{h(x)} = \rho(x)$.

\begin{example} \label{example:real surface2} 
	As an application of Theorem \ref{thm:LLNE},  consider again the real hypersurface $S$ defined in $\R^3$ by the equation $x^2+y^2-z^3=0$ of Example~\ref{example:real surface2}, and fix $t>0$. 
	Then the intersection $S_t = S \cap\{z=t\}$ is a circle with radius $t^{3/2}$. 
	Therefore, every $S_t$ is LNE with Lipschitz constant $C= 2\pi$, so that $S$ is link-LNE and hence LNE.    
\end{example}  

\begin{example} \label{ex:Nguyen} \cite[Proposition 3.11]{Nguyen2021} Consider the semialgebraic germ $(X,0)$ in $(\R^3,0)$ defined by $X=\{(t,x,z) \in \R^3 | 0 \leq x \leq t, z^2=t^2x^2  \}$ and the semialgebraic function  $\rho \colon (X,0) \to (\R^+,0)$ define by $(t,x,z) \mapsto t$. 
Then $\rho(w) = \Theta(\norm{w})$ and $X_r = \rho^{-1}(r) \cap X$ is LNE but its Lipschitz constant is $\Theta({1}/{r})$. 
Therefore, condition \eqref{eq:LLNE} of Theorem~\ref{thm:LLNE} is not satisfied, which implies that $(X,0)$ is not LNE.
\end{example}

\subsection{LNE-ness and Moderately Discontinuous homology}

In \cite{BobadillaHeinzePereiraSampaio2019}, Fern\'andez de Bobadilla, Heinze, Pe Pereira, and Sampaio defined  a homology theory called \emph{Moderately Discontinuous homology}.
It produces families of groups which are invariants of the bilipschitz homeomorphism classes of subanalytic germs with respect to either the inner or the outer metric. 
In particular, given a subanalytic germ $(X,0)\subset(\R^n,0)$, the identity map on $(X,0)$ induces homomorphisms between the corresponding Moderately Discontinuous homology groups of $(X,0)$ with respect to these two metrics, and it is easy to check that if $(X,0)$ is LNE then these homomorphisms are isomorphic. 
The authors asked whether the converse is true: 

\begin{quest}
	Let $(X,0)\subset(\R^n,0)$ be a subanalytic germ and assume that the identity map induces isomorphisms at the level of Moderately Discontinuous homology with respect to the inner and the outer metric at every point of $(X,0)$.
	Is $(X,0)$ necessarily LNE?
\end{quest}

In general, the answer is no: Example~\ref{ex:Nguyen} is a counter-example, as shown by Nguyen in \cite[Proposition 3.11]{Nguyen2021}.  
Notice that that example is semialgebraic and has a non-isolated singularity. 
The question is still open in the case of an isolated singularity or in the complex analytic setting.

\subsection{LNE-ness and tangent cones}

In this subsection, we state and discuss two necessary conditions for the LNE-ness of a subanalytic germ  $(X,0) \subset (\R^n,0)$ in term of its tangent cone, proved by Fernandes and Sampaio.

\begin{theorem}[{\cite[Corollary 3.11]{FernandesSampaio2019}}] 
		\label{thm:tangent cone} 
	Let $(X,0) \subset (\R^n,0)$ be a subanalytic germ and let $T_0X$ be its tangent cone at $0$. 
	If $X$ is LNE, then the two following conditions are satisfied:
\begin{enumerate}
\item \label{eq:tangent cone LNE} $T_0X$ is LNE;
\item  \label{eq:tangent cone reduced} $T_0X$ is reduced. 
\end{enumerate}
\end{theorem}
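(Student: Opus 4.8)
The plan is to deduce both statements from the arc criterion (Theorem~\ref{thm:arc criterion}) together with a comparison between arcs on $(X,0)$ and arcs on the tangent cone $T_0X$. The basic principle is that a real arc $\gamma$ on $(X,0)$ with $\gamma(t) = v t + o(t)$ "sees" only the point $v$ of the tangent cone to first order, so that pairs of arcs on $X$ that are tangent to the same direction should be compared with pairs of arcs on $T_0X$ with the same leading terms. More precisely, I would first recall that $T_0X$ can be described via the blowup of the origin, or equivalently via sequences $x_k \to 0$ in $X$ with $x_k/\norm{x_k} \to v$; and that for a subanalytic germ one has a good control: if $\gamma$ is an arc on $X$ then its leading direction $\lim \gamma(t)/t$ lies in $T_0X$, and conversely every arc on $T_0X$ can be approximated, up to higher order terms, by an arc on $X$.

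For condition~\eqref{eq:tangent cone LNE}, I would argue by contraposition: if $T_0X$ is not LNE, then by the arc criterion there are two real arcs $\delta_1, \delta_2$ on $T_0X$ with $q_i(\delta_1,\delta_2) > q_o(\delta_1,\delta_2) = q$ for some $q > 1$ (the inequality $q_o \le q_i$ always holds, so non-LNE means strict for some pair; and $q>1$ since arcs with distinct tangents have $q_i=q_o=1$). One then lifts $\delta_1,\delta_2$ to arcs $\widetilde\delta_1, \widetilde\delta_2$ on $X$ sharing the same truncations up to order $q$, which is where the subanalytic structure of the tangent cone enters. The outer contact is preserved by construction, $q_o(\widetilde\delta_1,\widetilde\delta_2) = q$, while the inner contact can only be made larger: any rectifiable path in $X$ between $\widetilde\delta_1(t)$ and $\widetilde\delta_2(t)$ "projects" to a path in $T_0X$ of comparable length, giving $q_i(\widetilde\delta_1,\widetilde\delta_2) \ge q_i(\delta_1,\delta_2) > q$. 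Hence $X$ is not LNE. For condition~\eqref{eq:tangent cone reduced}, the observation is that if $T_0X$ is non-reduced then it contains a component with multiplicity $\ge 2$ along some direction $v$, so — just as in Example~\ref{ex:non LNE} for the cusp — there exist two distinct arcs $\widetilde\delta_1,\widetilde\delta_2$ on $X$ with the same tangent direction $v$ and with $\widetilde\delta_1(t) - \widetilde\delta_2(t) = o(t)$, i.e.\ $q_o(\widetilde\delta_1,\widetilde\delta_2) > 1$, while the arcs are inner-transverse through the cone structure so that $q_i(\widetilde\delta_1,\widetilde\delta_2) = 1$; again the arc criterion rules out LNE-ness. (Alternatively, non-reducedness can be phrased as the existence of two arcs whose tangent cone projections coincide but which remain distinct, which is exactly the failure of injectivity of the tangent-cone map.)

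I expect the main obstacle to be the lifting step: constructing, from an arc on $T_0X$ (or a pair of arcs realizing non-reducedness), genuine arcs on $X$ that have the prescribed leading behavior and for which one can compare \emph{inner} distances in $X$ with inner distances in $T_0X$. The outer comparison is elementary, but controlling $d_i$ requires knowing that short paths in $X$ near the origin, rescaled by $1/t$, converge (in an appropriate Hausdorff sense) to paths in $T_0X$ of comparable length — essentially that the family of rescalings $X/t := \{x/t : x \in X\}$ converges to $T_0X$ not just as sets but metrically enough to transport length estimates. This is precisely the kind of statement that the Fernandes--Sampaio argument handles; I would lean on the subanalytic curve selection lemma and on the semialgebraicity of the pancake metric (Theorem~\ref{theorem:pancake_metric}) to make the inner-contact exponents well-defined and to extract the required arcs, rather than attempting a direct geometric estimate. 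A cleaner packaging, which I would prefer if available, is to invoke the result of \cite{FernandesSampaio2019} that $T_0X$ is bilipschitz homeomorphic for the \emph{outer} metric to a metric tangent cone built from rescalings, reducing everything to the arc criterion applied on $T_0X$ itself.
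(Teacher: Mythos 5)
Your proposal for condition~\eqref{eq:tangent cone LNE} takes a different route from the paper's, and as written it has a genuine gap. The paper argues \emph{directly}: assuming $(X,0)$ is LNE with constant $\lambda$, it takes two directions $v,w$ of $T_0X$, approximates them by sequences $(x_j)$, $(y_j)$ in $X$, uses LNE-ness to produce inner paths in $X$ of length at most $\lambda\norm{x_j-y_j}$, and extracts a path in $T_0X$ of length at most $(1+\lambda)\norm{v-w}$ as a rescaled limit via the Arzelà--Ascoli theorem; no arc criterion and no lifting are needed. Your contrapositive version rests on two unjustified transfer steps. The first, the lifting, fails in general: the definition of the tangent cone only guarantees that $X$ approximates the rays of $T_0X$ to order $o(t)$, not to order $o(t^{q})$ for a prescribed $q>1$. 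Already for the cusp of Example~\ref{ex:non LNE} the arc $(t,0)$ on the tangent cone admits no lift to $C$ closer than $\Theta(t^{3/2})$; since the exponent $q=q_o(\delta_1,\delta_2)$ is dictated by the bad pair on $T_0X$ and can be any rational greater than $1$, the outer contact of the lifted pair is not under control. The second transfer --- that every rectifiable path in $X$ ``projects'' to a path in $T_0X$ of comparable length, so that inner distances in $X$ dominate inner distances in $T_0X$ --- is precisely the hard analytic content: there is no Lipschitz projection from $X$ to $T_0X$ near the origin, and the rigorous substitute is the Arzelà--Ascoli limit argument, which the paper runs in the direct (not contrapositive) direction where the LNE constant of $X$ is available as input. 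Attributing this step to ``the kind of statement that the Fernandes--Sampaio argument handles'' misdescribes their proof, which never lifts arcs from $T_0X$ to $X$.

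For condition~\eqref{eq:tangent cone reduced} your outline does match the paper's: one produces, near a non-reduced component of $T_0X$ (reducedness being defined via the spherical blowup), a pair of arcs on $X$ with the same tangent direction, outer contact $>1$, and strictly smaller inner contact, exactly as in Example~\ref{ex:non LNE}; your claim that $q_i=1$ exactly is neither needed nor justified in general, but $q_i<q_o$ is what the construction yields and is what the arc criterion requires. Finally, note that your contact-order conventions are reversed throughout: since $d_o\le d_i$ and both quantities are $\Theta(t^{q})$ as $t\to 0$, one always has $q_i\le q_o$ (the cusp has $q_i=1<q_o=3/2$), and failure of LNE-ness means $q_i<q_o$ for some pair. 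You flipped this consistently, so it does not by itself break the logical skeleton, but it makes the stated justification of your inner-contact inequality point in the wrong direction.
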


\begin{proof}[Sketch]
	The proof of the first part uses the following notion of tangent cone of a subanalytic germ in $(X,0) \subset (\R^n,0)$, introduced in \cite[Section 2.2]{FernandesSampaio2019}, which generalizes the classical definition in the real or complex analytic setting.
	Let $ D_0(X)$ be the set of unitary vectors $v$ in $\R^n \setminus \{0\}$ such that there exist a sequence of points $(x_j)_{j \in \N}$ in $X \setminus \{0\}$ converging to $0$ such that $\lim_{j \to +\infty} \frac{x_j}{\norm{x_j}}=v$;  the {\it tangent cone} $T_0X$ of $(X,0)$ at $0$ is defined by 
\[
	T_0X = \{tv \  |  \ v  \in D_0(X), t \in \R^+\}.
\]
	Assume that $(X,0)$ is LNE. 
	Let $0 \in U \subset X$ be a small neighborhood of $0$ in $X$ and let  $\lambda >0$ such that for all $x,y \in U, d_i(x,y)\leq \lambda d_o(x,y)$. 
	The proof of the first part of the theorem presented in \cite{FernandesSampaio2019} considers two vectors $v,w$ in $T_0X$ and constructs an arc $\alpha$ in $T_0X$ between $v$ and $w$  with length at most $(1+\lambda)\norm{x-y}$. 
	The arc $\alpha$ is obtained by an elegant argument using the Arzelà--Ascoli Theorem, as the limit of arcs joining two sequences of points  $(x_j)$ and $(y_j)$ in $(X,0)$ such that $\lim  \frac{x_j}{\norm{x_j}}=v$ and $\lim  \frac{y_j}{\norm{y_j}}=w$.  
	The second part of the theorem requires a definition of {\it reducedness} for the tangent cone, introduced in \cite{BirbrairFernandesGrandjean2017} and based on an equivalent definition of the tangent cone $T_0X$ using spherical blowups. 
	We refer to \cite[Section 3]{FernandesSampaio2019} for details, and only remark that the definition coincides with the classical one in the case of an analytic germ. 
	The proof consists then in the construction of a pair of arcs $(p_1, p_2)$ which does not satisfy the arc criterion in the neighborhood of a non reduced component of $T_0X$, in a similar way as in Example~\ref{ex:non LNE}. 
\end{proof}

The converse of Theorem~\ref{thm:tangent cone} is not true.
It is easy to find counter-examples among semialgebraic germs with non-isolated singularities, such as $X= \big\{(x,y,z)\in \R^3 \,\big|\, \big(x^2+y^2-z^2\big)\big( x^2+(|y| -z-z^3)^2-z^6\big)=0, z \geq 0\big\}$ (\cite[Example 3.12]{FernandesSampaio2019}). 

Note that in the example above the link of $(X,0)$ is not LNE, hence $(X,0)$ cannot be LNE itself thanks to Lemma~\ref{lemma:LNE_implies_link_is_LNE}.
Therefore, it becomes natural to ask the following question: given a subanalytic germ $(X,0) \subset (\R^n,0)$ whose link is LNE and satisfying Conditions~\eqref{eq:tangent cone LNE} and \eqref{eq:tangent cone reduced} of Theorem~\ref{thm:tangent cone}, is $(X,0)$ necessarily LNE? 
The answer is negative, even among complex germs with isolated singularities. 
A counter-example is given by Neumann and the second author in the appendix of \cite{FernandesSampaio2019}: \footnote{Note that the link of a subanalytic germ with isolated singularities is smooth, and therefore LNE.}

\begin{proposition} 
	The hypersurface germ  in $(\C^3,0)$ with equation
\[
	y^4+z^4+x^2(y+2z)(y+3z)^2+(x+y+z)^{11}=0
\]
	is not LNE , it has an isolated singularity at $0$, and its tangent cone is reduced and LNE. 
\end{proposition}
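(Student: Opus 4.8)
The plan is to use the complex-surface arc criterion alluded to at the end of Subsection~\ref{subsec:arc criterion} (the finite arc criterion from Section~\ref{sec:surfaces}), since the classical Birbrair--Mendes criterion (Theorem~\ref{thm:arc criterion}) would require testing infinitely many pairs of arcs, whereas for a complex surface germ it suffices to check finitely many pairs coming from the carrousel / Nash-type decomposition of a generic linear projection. First I would record the easy structural facts: the polynomial $f=y^4+z^4+x^2(y+2z)(y+3z)^2+(x+y+z)^{11}$ has an isolated singularity at $0$ (its lowest-order part defines a curve in $\Pro^2$ with only a few singular points, which one checks are resolved by the higher-order terms, or one simply verifies $\dim_\C \C[x,y,z]/(\partial_x f,\partial_y f,\partial_z f)<\infty$ directly). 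The tangent cone is cut out by the degree-$4$ initial form $y^4+z^4+x^2(y+2z)(y+3z)^2$; I would check this form is reduced (no repeated factors over $\C$) and that the resulting projective curve, as a one-dimensional cone, is LNE — using Proposition~\ref{prop:cone} this reduces to LNE-ness of a curve, and a reduced plane projective curve that is a union of smooth branches meeting transversally gives a cone that is LNE, so the point is to verify the initial form factors into distinct smooth transversal pieces (the choice of the special linear factors $y+2z$, $(y+3z)^2$ together with $y^4+z^4$ is engineered precisely so this holds after accounting for the squared factor being "fixed" by a higher term).

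Next, the heart of the matter: exhibiting that $(X,0)$ is not LNE. The strategy, following Example~\ref{ex:non LNE} and the curve-germ discussion, is to find two real arcs $p_1(t),p_2(t)$ on $X$ with $q_i(p_1,p_2)<q_o(p_1,p_2)$, i.e. the outer distance decays strictly faster than the inner distance. The natural place to look is along the component of the tangent cone corresponding to the factor $(y+3z)^2$: although the tangent cone is reduced (because the $(x+y+z)^{11}$ term breaks the non-reducedness of $x^2(y+3z)^2$ at the relevant point), the geometry near that direction still behaves like a "thin part" where two sheets of $X$ come together faster in the ambient metric than any path inside $X$ can connect them. Concretely I would substitute a Puiseux-type family $x = at^\alpha + \cdots$, $y = -3z + bt^\beta+\cdots$, solve $f=0$ for the remaining coordinate, and discover two solution branches $p_1,p_2$ with $\|p_1(t)-p_2(t)\|=\Theta(t^{q_o})$ for some $q_o>1$, while the link computation (intersecting with spheres, as in Lemma~\ref{lemma:LNE_implies_link_is_LNE}) or a direct shortest-path estimate gives $d_i(p_1(t),p_2(t))=\Theta(t)$, or at any rate $\Theta(t^{q_i})$ with $q_i<q_o$. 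Then $d_o/d_i=\Theta(t^{q_o-q_i})\to 0$ contradicts LNE-ness.

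The main obstacle I expect is the inner-distance estimate. The outer estimate is a finite Puiseux computation; but showing $d_i(p_1(t),p_2(t))$ is genuinely of strictly smaller order requires understanding the inner metric of the surface germ near the problematic direction, which is exactly the subtle point that distinguishes this example from a non-isolated one. The cleanest route is to use the inner Lipschitz geometry machinery for normal complex surfaces (the Neumann--Pichon thick-thin decomposition / the description of the inner metric via the resolution), or the complex arc criterion directly, to argue that the two arcs $p_1,p_2$ lie in a single "thick" (metrically conical) zone of $(X,0)$, inside which inner and outer distances of points at distance $t$ from $0$ are both $\Theta(t)$ up to a uniform constant — so $q_i=1$. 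One then only needs $q_o>1$, which follows because $p_1$ and $p_2$ share the same tangent direction (the $(y+3z)$-direction) and first separate at a higher Puiseux exponent dictated by the $11$-th order term.

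I would organize the write-up as: (1) isolated singularity — short verification; (2) tangent cone is reduced and LNE — identify the initial form, factor it, invoke Proposition~\ref{prop:cone}; (3) construct $p_1,p_2$ explicitly and compute $q_o$; (4) compute $q_i=1$ using that both arcs sit in a metrically conical region, or equivalently cite the complex surface arc criterion and the structure of the generic projection; (5) conclude via Theorem~\ref{thm:arc criterion} (or its complex refinement) that $(X,0)$ is not LNE. The example is clearly reverse-engineered so that steps (2) and (3)--(4) are "just" computations once the right Puiseux ansatz is chosen; pinning down that ansatz — equivalently, locating which essential/non-essential exponent of the discriminant curve of a generic projection produces the inner/outer contact gap — is the real content, and is where I would spend the effort.
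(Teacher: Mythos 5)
The survey itself does not prove this proposition---it defers entirely to the appendix of \cite{FernandesSampaio2019} by Neumann and Pichon---so I am judging your proposal on its own terms. It contains one concrete error and one genuine gap. The error: the term $x^2(y+2z)(y+3z)^2$ has degree $5$, not $4$, so the initial form of $f$ is just $y^4+z^4$ and the tangent cone is the union of the four pairwise transverse planes $y=\zeta z$, $\zeta^4=-1$, all containing the $x$-axis. This cone is indeed reduced and LNE (each plane is linear and any two meet transversally along the $x$-axis, so the cone criterion of Proposition~\ref{prop:cone} or a direct estimate applies), but not for the reason you give, and---more damagingly---$(y+3z)$ is \emph{not} a factor of the initial form, so your plan to hunt for the bad pair of arcs ``along the component of the tangent cone corresponding to $(y+3z)^2$'' has no referent. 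The locus where something interesting can happen is the singular line of the tangent cone (the $x$-axis, where all four planes meet); the degree-$5$ term only becomes visible after blowing up the origin, where in the chart $y=ux$, $z=vx$ the strict transform reads $u^4+v^4+x(u+2v)(u+3v)^2+x^7(1+u+v)^{11}=0$, and it is the germ of this surface at the point $x=u=v=0$ of the exceptional divisor that destroys LNE-ness while leaving the tangent cone untouched.

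The gap: the actual content of the proof---exhibiting two arcs with $q_i<q_o$ and computing both contact orders---is not carried out; you acknowledge that ``pinning down the ansatz is the real content.'' Moreover, the shortcut you propose for the inner contact ($q_i=1$ because the arcs sit in a thick, metrically conical zone) is almost certainly unavailable here: any witnessing pair must share a tangent direction, that direction must be the singular axis of the tangent cone (away from it the germ is a multigraph over its LNE tangent cone and no contact gap can occur), and arcs tangent to that axis land, after one blowup, on an exceptional curve of inner rate $>1$, forcing $q_i>1$. The published argument accordingly produces arcs with $1<q_i<q_o$, the gap being created by the factor $(y+3z)^2$ at the level of the blowup, and controls the inner contact via the inner-rate/thick-thin machinery of \cite{BirbrairNeumannPichon2014}; it does not pass through the finite test-curve criterion of Theorem~\ref{theorem:criterion_arcs_surfaces}, which you invoke but also do not execute (you would need the discriminant curve of a generic plane projection, which you never compute). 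As written, your steps (3)--(5) restate the arc criterion of Theorem~\ref{thm:arc criterion} rather than applying it, so the proof of the main assertion is missing.
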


To end this subsection, let us mention that Fernandes and Sampaio proved in \cite{FernandesSampaio2020}  the following analogue of Theorem~\ref{thm:tangent cone} about complex algebraic sets of any dimension which are LNE at infinity, recovering in particular the results of \cite{DiasRibeiro2021}.

\begin{theorem} 
	Let $X$ be  complex analytic set in $\C^n$. 
	Assume that:
\begin{enumerate}
\item $X$ is Lipschitz Normally Embedded at infinity, that is there exists a compact subset $K$ of $\C^n$ such that each connected component of $X \setminus K$ is Lipschitz Normally Embedded;
\item  The tangent cone of $X$ at infinity is a linear subspace of $\C^n$.
\end{enumerate}
Then $X$ is an affine linear subspace of $\C^n$.
\end{theorem}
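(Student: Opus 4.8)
The plan is to reduce the statement to a more classical rigidity theorem: a complex analytic subset of $\C^n$ whose tangent cone at infinity is an affine linear subspace must already be that subspace, \emph{provided} one can control its large-scale geometry. The hypothesis that the tangent cone at infinity is linear says that outside a large ball $X$ looks, to first order, like a linear subspace $V$; the LNE-at-infinity hypothesis is what upgrades "to first order" into genuine metric control, and this is where the real content lies. So the first step is to set up the tangent cone at infinity carefully, in the style of the $D_0(X)$ construction recalled in the sketch of Theorem~\ref{thm:tangent cone}, but with sequences $x_j \to \infty$ and rescalings $x_j/\norm{x_j}$; call the resulting cone $T_\infty X$, which by hypothesis equals a linear subspace $V$ of some dimension $d = \dim X$.

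Next I would exploit the analogue at infinity of the first conclusion of Theorem~\ref{thm:tangent cone}, namely that LNE-ness at infinity forces the inner and outer metrics on $X\setminus K$ to be comparable with a \emph{uniform} constant on all the unbounded components; combined with the fact that $T_\infty X = V$ is smooth, this should force each unbounded component of $X\setminus K$ to be, metrically, asymptotically a flat $d$-plane — in particular, the density of $X$ at infinity equals $1$. The key step is then a monotonicity/multiplicity argument: for a complex analytic set the Lelong number (the density) at infinity is an integer that dominates the "number of sheets" of $X$ over a generic linear projection to $V$, and density $1$ forces $X$ to be a single sheet, i.e.\ the graph of a holomorphic map $f\colon V' \to V^{\perp}$ defined outside a compact set, where $V'$ is a large region in $V$. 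Because $X$ is a \emph{closed} analytic subset of all of $\C^n$ (not merely of the complement of $K$), this graph extends: $X$ is globally the graph of an entire holomorphic map $f\colon \C^d \to \C^{n-d}$, after a linear change of coordinates identifying $V$ with $\C^d\times\{0\}$.

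The final step is to show $f$ is affine linear. Since $T_\infty X = V = \C^d\times\{0\}$, the map $f$ must be \emph{sublinear} at infinity: $\norm{f(w)} = o(\norm{w})$ as $\norm w\to\infty$, because any direction in which $f$ grew linearly would contribute a vector outside $V$ to the tangent cone at infinity. An entire holomorphic function on $\C^d$ with $\norm{f(w)} = o(\norm{w})$ is constant by the standard Liouville-type estimate (Cauchy estimates on balls of radius $R$ give that all first derivatives are $O(1/R) \to 0$); hence $f$ is constant, $X = V + c$ is an affine linear subspace, and in fact $c=0$ if one insists $0\in \overline X$, though the statement only claims affineness. One then invokes the LNE hypothesis itself only to the extent needed to get density $1$; alternatively, if one wants $f$ merely affine rather than constant, the weaker growth bound $\norm{f(w)} = O(\norm w)$ from a linear tangent cone together with Liouville already pins down $f$ up to an affine map, and matching the tangent cone then kills the linear part unless it is tangent to $V$, which is the contradiction.

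The main obstacle I anticipate is the second step: making rigorous the passage from "LNE at infinity with uniform constant" plus "linear tangent cone at infinity" to "density $1$ at infinity". This requires an honest large-scale analogue of the Fernandes--Sampaio argument (Arzelà--Ascoli plus the uniform bilipschitz constant to produce short arcs in $T_\infty X$), together with the interplay between metric density, the Lelong number, and the sheet count of a projection; the subtlety is that without the uniform LNE constant one could have an unbounded component that spirals or oscillates so that it is asymptotic to $V$ only in a weak set-theoretic sense while carrying multiplicity $>1$, and ruling this out is exactly the point where LNE-at-infinity must be used in an essential, quantitative way rather than just through the tangent cone.
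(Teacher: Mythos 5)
First, a caveat on the comparison itself: the survey does not prove this theorem --- it is quoted from Fernandes--Sampaio \cite{FernandesSampaio2020} with a pointer to that paper for the proof and even for the definition of the tangent cone at infinity --- so your proposal can only be judged on its own merits. Your overall strategy (reduce to ``the density of $X$ at infinity equals $1$'' and then invoke a rigidity statement) is indeed the right skeleton and is consistent with the Fernandes--Sampaio approach, which runs through the relative multiplicities of the components of the tangent cone at infinity. The problem is that the step you yourself flag as ``the main obstacle'' is not a technical loose end: it \emph{is} the theorem. The implication ``LNE at infinity $+$ linear tangent cone at infinity $\Rightarrow$ density $1$ at infinity'' is exactly the assertion that the relative multiplicity of $X$ along $V$ at infinity is $1$, and you give no argument for it beyond ``this should force.'' To see that it carries all the content, consider the parabola $X=\{w=z^2\}\subset\C^2$: its tangent cone at infinity is the single line $\{z=0\}$, yet its density at infinity is $2$ and it is not affine linear; the pair of points $(t^2,\pm t)$ has outer distance $\Theta(\sqrt r)$ and inner distance $\Theta(r)$ at scale $r=t^2$, so the only thing separating this example from the hypotheses is the quantitative LNE condition. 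A complete proof must turn ``two sheets over $V$ approach each other sublinearly in $r$'' into a violation of the uniform bilipschitz constant, which requires controlling the \emph{inner} distance between the sheets (they could a priori be joined through a nearby branch locus); this is the at-infinity analogue of the reducedness statement \eqref{eq:tangent cone reduced} of Theorem~\ref{thm:tangent cone} and needs an actual argument, not an appeal to analogy. You must also rule out several disjoint unbounded components each asymptotic to $V$ (each would contribute $1$ to the density), which the component-by-component LNE hypothesis does not do for free.

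Your endgame also has gaps independent of the main one. Granting density $1$ and a single sheet, you pass to ``graph of $f$ over $V$ minus a compact set, extend to an entire map, apply Liouville.'' The extension step silently uses Hartogs and therefore fails for $d=1$ (a holomorphic map on $\{|w|>R\}\subset\C$ need not extend across the disk, and the sublinear growth bound only kills the positive part of the Laurent expansion, not the principal part); and the identification of $X$ with the graph of the extension needs an identity-principle argument plus the fact that $\C^n$ contains no positive-dimensional compact analytic sets, none of which you supply. A cleaner and dimension-uniform endgame, once density $1$ at infinity is established, is Lelong's monotonicity formula combined with the Bishop--Stoll characterization: for every $x\in X$ the density function $r\mapsto \operatorname{vol}(X\cap B(x,r))/(c_d r^{2d})$ is nondecreasing, is at least $1$ as $r\to 0^+$, and is at most $1$ in the limit $r\to\infty$, hence is identically $1$; equality in the monotonicity formula makes $X$ a cone with vertex at each of its points, i.e.\ an affine subspace. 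I would encourage you to restructure the write-up around (i) a genuine proof of the relative-multiplicity-one statement at infinity and (ii) the Lelong--Stoll argument, rather than the graph-plus-Liouville route.
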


We refer to \cite{FernandesSampaio2020} for details, and in particular to \cite[Section 4]{FernandesSampaio2020} for the definition of the tangent cone at infinity.
This result is remarkable since it shows that an a priori mild assumption at infinity forces the rigidity of the whole $X$.
Note that if $X$ is Lipschitz regular at infinity, that is if outside of a large compact set in $\R^n$ it is bilipschitz homeomorphic to an open subset of $\R^k$ for some $k$, then $X$ is LNE at infinity (this is \cite[Corollary 3.3]{FernandesSampaio2020}).

\subsection{LNE in topology and other fields}

Lipschitz Normal Embeddings have also been useful to study problems in topology.
For example, Birbrair, Mendes, and Nu\~no-Ballesteros \cite{BirbrairMendesNuno-Ballesteros2018} prove that for a large class of real analytic parametrized surfaces in $\R^4$ LNE-ness implies the triviality of the knot obtained as their link.

More recently, Fernandes and Sampaio \cite[Theorem~3.2]{FernandesSampaio2021} showed that two LNE compact subanalytic sets which are close enough with respect to the Hausdorff distance have isomorphic fundamental groups.
This leads them to give topological conditions on the link of a LNE germ that ensure that the germ is smooth, (see Theorem~4.1 in \emph{loc. cit.}), from which they derive the following remarkable result, which is a metric version, in arbitrary dimension, of Mumford's link criterion for the smoothness of normal surface germs.

\begin{theorem}[{\cite[Theorem~4.2]{FernandesSampaio2021}}]
	Let $(X,0)$ be a complex analytic germ of dimension $k$ with isolated singularities. 
	Then $(X,0)$ is smooth if and only if it is locally metrically conical and its link at $0$ is $(2k-2)$-connected.
\end{theorem}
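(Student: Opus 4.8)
The plan is to prove the two implications separately, with the interesting direction being that a locally metrically conical germ with a highly connected link must be smooth.

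\medskip

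\noindent\textbf{The easy direction.} If $(X,0)$ is smooth of complex dimension $k$, then it is analytically equivalent to $(\C^k,0)$, hence it is locally metrically conical (the Euclidean ball is a metric cone over the sphere) and its link is the sphere $S^{2k-1}$, which is $(2k-2)$-connected. Nothing more is needed here.

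\medskip

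\noindent\textbf{The main direction.} Assume $(X,0)$ is locally metrically conical with link $L$ that is $(2k-2)$-connected. The strategy is to combine two ingredients already available in the literature cited in the excerpt: first, the Fernandes--Sampaio result that two LNE compact subanalytic sets sufficiently Hausdorff-close have isomorphic fundamental groups, together with their derived smoothness criterion (Theorem~4.1 in \cite{FernandesSampaio2021}); second, Mumford's classical theorem that a normal complex surface germ whose link is simply connected is smooth, and its higher-dimensional analogues for isolated hypersurface/complete-intersection singularities via the Milnor fibration and the $h$-cobordism theorem. Concretely, I would argue as follows. Local metric conicality means the germ $(X,0)$ is bilipschitz (for the inner, hence also here for the outer metric, since a metrically conical germ is in particular LNE) homeomorphic to the straight cone $C(L)$ over its link. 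The cone $C(L)$ is LNE, and as $r\to 0$ the rescaled links $\tfrac1r(X\cap S_r)$ converge in the Hausdorff metric to $L$; applying the Fernandes--Sampaio proximity theorem identifies the fundamental group (and, after the refinement in their Theorem~4.1, the smoothness-relevant topology) of $L$ with that of a nearby LNE model. The hypothesis that $L$ is $(2k-2)$-connected forces $L$ to be a homotopy sphere $S^{2k-1}$: indeed for an isolated complex singularity the link is $(k-2)$-connected for free (Milnor), and $(2k-2)$-connectedness together with Poincaré duality on the $(2k-1)$-manifold $L$ kills all the remaining homology, so $L$ is a homotopy $(2k-1)$-sphere. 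One then invokes the higher-dimensional Mumford-type criterion (as packaged in Theorem~4.1 of \cite{FernandesSampaio2021}): an isolated complex singularity whose link is a homotopy sphere and which is metrically conical must be smooth. For $k=2$ this is exactly Mumford; for $k\ge 3$ it follows because the Milnor fiber is then a parallelizable manifold bounding $L$, and metric conicality rules out the exotic/non-trivial monodromy that would otherwise allow a singular germ with sphere link.

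\medskip

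\noindent\textbf{Where the difficulty lies.} The genuinely hard point is the passage from "link is a homotopy sphere $+$ metrically conical" to "germ is smooth" in dimension $k\ge 3$; in the algebraic category there do exist isolated singularities (e.g. Brieskorn spheres realized as links of hypersurface singularities) with link homeomorphic to a sphere yet the germ is singular, so the metric conicality hypothesis is doing essential work. The mechanism, which I would spell out by citing \cite[Theorem~4.1]{FernandesSampaio2021}, is that metric conicality implies the link, with its contact/CR structure or at least with enough of its smooth structure, bounds the right kind of manifold, and Fernandes--Sampaio's fundamental-group-under-Hausdorff-proximity theorem upgrades to the statement that the germ is bilipschitz trivial, whence smooth by their earlier characterization. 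Thus the proof is essentially: (i) reduce topology of the germ to topology of $L$ via metric conicality; (ii) use connectivity $+$ Poincaré duality to get $L\simeq S^{2k-1}$; (iii) feed this into the metric Mumford criterion of \cite{FernandesSampaio2021} to conclude smoothness. I expect step (iii) to be the one that cannot be shortcut and whose proof genuinely requires the Lipschitz-geometric input rather than purely topological arguments.
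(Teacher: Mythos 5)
First, a caveat on the comparison itself: the survey does not prove this theorem. It is quoted from \cite{FernandesSampaio2021} with only a one-line indication of provenance, namely that it is derived from Theorem~4.1 of that paper, which in turn rests on the Hausdorff-proximity result (two LNE compact subanalytic sets that are Hausdorff-close have isomorphic fundamental groups). So your proposal can only be measured against that indicated route, and there it has a genuine gap: your step (iii) is circular. You reduce the whole problem to ``feed the homotopy sphere into the metric Mumford criterion of \cite[Theorem~4.1]{FernandesSampaio2021}'', i.e.\ you invoke as a black box precisely the statement whose proof is being asked for. The only content you actually supply is that a $(2k-2)$-connected closed orientable $(2k-1)$-manifold is a homotopy sphere --- correct, but this is just a reformulation of the hypothesis. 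The mechanism you sketch for (iii) (Milnor fiber parallelizability, $h$-cobordism, ``metric conicality rules out exotic monodromy'') is not an argument --- the $E_8$ Brieskorn sphere already bounds a parallelizable Milnor fiber and is the link of a singular germ, so nothing about parallelizability or bounding can do the work --- and it is not the mechanism of the cited proof. The actual route goes through the \emph{tangent cone}, which never appears in your argument: one transfers the connectivity of the link of $(X,0)$ to the link of $T_0X$ by applying the Hausdorff-proximity theorem to the rescaled links $\tfrac1t(X\cap S_t)$; since $(X,0)$ has an isolated singularity, $L(T_0X)\to\mathbb P(T_0X)$ is an $S^1$-bundle, and the Gysin sequence together with the vanishing of $H^*(L(T_0X))$ in degrees $1,\dots,2k-2$ forces the hyperplane class to generate the cohomology of $\mathbb P(T_0X)$, i.e.\ $\mathbb P(T_0X)$ has degree one and $T_0X$ is a linear subspace; one then concludes by the Lipschitz rigidity theorem (a germ that is LNE with linear tangent cone is smooth --- the local counterpart of the ``at infinity'' theorem of \cite{FernandesSampaio2020} quoted in the survey).

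A second, smaller but real, error: your parenthetical ``a metrically conical germ is in particular LNE'' is false for the standard (inner-metric) notion of metric conicality. As the survey itself recalls, \emph{every} complex curve germ is inner bilipschitz equivalent to the straight cone over its link, yet the cusp $y^2=x^3$ is not LNE; with the inner-metric reading of ``locally metrically conical'' the theorem would even be false for $k=1$, the cusp being a counterexample. So the precise definition used in \cite{FernandesSampaio2021} (which the survey pointedly defers to ``page 4'' of that paper, and which must involve the outer metric, combined with the fact that the cone over a smooth compact link is LNE) is doing essential work, and cannot be asserted in passing.
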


The definition for $X$ being locally metrically conical at $0$ is given in \cite[page 4]{FernandesSampaio2021}.
We note that an earlier result towards a metric characterization of smoothness was obtained by Birbrair, Fernandes, Lê, and Sampaio \cite{BirbrairFernandesLeSampaio2016}, who proved that a germ which is outer bilipschitz equivalent to a smooth germ $(\C^m,0)$ is itself smooth.

We also remark that a problem related to the embedding problem discussed in Subsection~\ref{subsection:normal_reembedding_and_pancake} is studied in functional analysis.
Indeed, some people working in that field are interested in studying different classes of embeddings (some of which closely resemble those of the Theorem~\ref{theorem:embedding_problem} of Birbrair and Mostowski) of \emph{discrete} metric spaces into suitable Banach spaces.
We refer the interested reader to the monograph~\cite{Ostrovskii2013} and to the many references found therein.


\section{LNE among complex surface germs}
\label{sec:surfaces}

The goal of this section is to overview some recent advances on the study of LNE singularities among complex surface germs.

\subsection{A refinement of the arc criterion}

As was mentioned in subsection~\ref{subsec:arc criterion}, the arc-based criterion for LNE-ness of Birbrair and Mendes given in Theorem~\ref{theorem:criterion_arcs_original} is difficult to use effectively in practice, as it requires to compute inner and outer contact orders of infinitely many pairs of arcs.
Whenever $(X,0)$ is a normal surface germ, this situation has been improved upon by Neumann, Pedersen, and the second author of this survey (see \cite{NeumannPedersenPichon2020a}), and then further in an upcoming work by Pedersen, Schober, and the two authors (see \cite{FantiniPedersenPichonSchober2022}), leading to a drastic reduction of the amount of pairs of real arcs whose contact orders have to be compared, down to a finite (and in fact rather small) number.


In order to state the improved criterion we need to introduce the notion of test curve.
Given a sequence of point blowups $\rho\colon Y_\rho \to \C^2$ of $(\C^2,0)$ and an irreducible component $E$ of the exceptional divisor $\rho^{-1}(0)$ of $\rho$, a \emph{test curve} at $E$ is any plane curve germ $(\gamma,0) \subset (\C^2,0)$ whose strict transform via $\rho$ is a smooth curve transverse to $E$ at a smooth point of $\rho^{-1}(0)$.
For the purpose of the criterion, it is sufficient to take for $\rho$ any sequence such that the strict transform $\Delta^*$  via $\rho$ of the discriminant curve $\Delta$ of  a generic plane projection  $\ell\colon (X,0)\to(\C^2,0)$ of $(X,0)$ 
 is a disjoint union of irreducible curves cutting the exceptional divisor $\rho^{-1}(0)$ of $\rho$ at smooth points (such as for example any good embedded resolution of $\Delta$), to consider a suitable subset $\{E_0,\ldots,E_s\}$ of the set of irreducible components of $\rho^{-1}(0)$, and to pick one test curve $\gamma_i$ at $E_i$ for each $i=0,\ldots,s$;
this gives rise to a set $\{\gamma_0,\ldots,\gamma_s\}$ called a \emph{family of test curves} for $(X,0)$ with respect to $\ell$.
We can now state the criterion.

\begin{theorem}[\cite{NeumannPedersenPichon2020a, FantiniPedersenPichonSchober2022}] \label{theorem:criterion_arcs_surfaces}
	Let $(X,0)$ be a normal surface singularity, let $\ell\colon (X,0)\to(\C^2,0)$ be a generic plane projection of $(X,0)$, and let  $\{\gamma_0,\ldots,\gamma_s\}$ be a family of test curves for $(X,0)$ with respect to $\ell$.
	Then the following conditions are equivalent:
	\begin{enumerate}
		\item $(X,0)$ is LNE.
		\item For every $j=0,\ldots,s$ and for every pair of distinct irreducible components $\xi$ and $\xi'$ of the principal part of $\ell^{-1}(\gamma_j)$, then $\xi$ and $\xi'$ have the same multiplicity as $\gamma_j$ and satisfy the equality
		\(
		q_i(\xi,\xi')=q_{o}(\xi,\xi').
		\)
	\end{enumerate}
\end{theorem}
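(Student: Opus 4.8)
The plan is to deduce Theorem~\ref{theorem:criterion_arcs_surfaces} from the general arc criterion (Theorem~\ref{thm:arc criterion}), using the generic plane projection $\ell\colon (X,0)\to(\C^2,0)$ as the main organizational tool. The key point is that a generic projection is a bilipschitz homeomorphism for the \emph{outer} metric on the complement of the zero set $\Pi$ of the Jacobian, while it distorts the \emph{inner} metric in a way entirely governed by the contact of arcs with the curves lying above the discriminant $\Delta$. This allows one to translate the (infinitely many) arc comparisons in $(X,0)$ demanded by Theorem~\ref{thm:arc criterion} into a question about arcs in $(\C^2,0)$ and their contact with $\Delta$, which in turn is controlled by the finitely many test curves. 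Since the implication (1)$\Rightarrow$(2) is essentially immediate — if $(X,0)$ is LNE then every pair of arcs satisfies $q_i=q_o$, in particular those appearing in $\ell^{-1}(\gamma_j)$, and the multiplicity statement follows because two components of $\ell^{-1}(\gamma_j)$ of different multiplicities would be separated inside a single fibre and hence have $q_o$ strictly larger than their inner contact — the substantive direction is (2)$\Rightarrow$(1).

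\textbf{Key steps for (2)$\Rightarrow$(1).} First I would reduce, via Theorem~\ref{thm:arc criterion}, to showing that an arbitrary pair of real (equivalently, complex) arcs $\delta_1,\delta_2$ on $(X,0)$ satisfies $q_i(\delta_1,\delta_2)=q_o(\delta_1,\delta_2)$; using Remark~\ref{rk:parametrization} one may assume $\delta_1,\delta_2$ have the same tangent line and reparametrize by slices. Second, I would recall the structure theory of the generic projection: the inner metric on $(X,0)$ is carried by a decomposition of the Milnor cone of $(X,0)$ into ``$\mathcal A$-pieces'' and ``$\mathcal D$-pieces'' indexed by the Newton–Puiseux data of the discriminant $\Delta$ of $\ell$ (this is the content of the analytic classification of Neumann–Pichon that underlies \cite{NeumannPedersenPichon2020a}); the inner contact $q_i(\delta_1,\delta_2)$ of two arcs and the outer contact $q_o(\delta_1,\delta_2)$ are then both computable from the images $\ell\circ\delta_1,\ell\circ\delta_2$ and from how $\ell$ behaves along the curves $\xi$ over $\Delta$. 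Third — the technical heart — I would show that it suffices to test the equality $q_i=q_o$ on pairs of components of $\ell^{-1}(\gamma)$ as $\gamma$ ranges over \emph{all} test curves, because any arc on $(X,0)$ is, up to the relevant orders of contact, captured by the arc lying over some test curve transverse to the appropriate exceptional component $E_i$; this is where one uses that the $E_i$ arising from a good resolution of $\Delta$ exhaust the possible ``rates'' at which an arc can approach $\Delta$. Fourth, I would pass from ``all test curves'' to ``one test curve at each $E_i$, for $i$ in the prescribed finite set $\{E_0,\dots,E_s\}$'': this uses that the inner and outer contact orders of the lifted components $\xi,\xi'$ depend only on the component $E_i$ of $\rho^{-1}(0)$ that $\gamma$ meets, and not on the chosen transversal, together with the observation that components $E$ not in $\{E_0,\dots,E_s\}$ contribute no new constraint (the relevant pieces of $(X,0)$ over them are automatically metrically conical, so $q_i=q_o$ there for free). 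Assembling these, condition (2) forces $q_i(\delta_1,\delta_2)=q_o(\delta_1,\delta_2)$ for every pair of arcs, hence LNE-ness by Theorem~\ref{thm:arc criterion}.

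\textbf{Main obstacle.} The hard part will be the third and fourth steps: proving that the finite family of test curves genuinely suffices, i.e.\ that \emph{no} pair of arcs on $(X,0)$ can violate $q_i=q_o$ without this violation already being visible among the components of some $\ell^{-1}(\gamma_j)$. This requires a careful case analysis of where two arcs $\delta_1,\delta_2$ can diverge — along a single $\mathcal D$-piece, along an $\mathcal A$-piece, or by passing through several pieces — and a proof that in each case the worst-case inner-versus-outer discrepancy is realized by arcs lying over a test curve at one of the distinguished components $E_i$. One must also check that the chosen finite set $\{E_0,\dots,E_s\}$ (the components met by the strict transform $\Delta^*$ of the discriminant, plus possibly $E_0$) is large enough to see every such piece; this is precisely the refinement of \cite{NeumannPedersenPichon2020a} carried out in \cite{FantiniPedersenPichonSchober2022}, and it rests on a detailed bookkeeping of the carousel/Puiseux structure of $\Delta$ and of the multiplicities of the components of $\ell^{-1}(\gamma_j)$. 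Establishing the ``same multiplicity'' clause in (2) as both necessary and sufficient — rather than as an extra hypothesis — is the other delicate point, since it is what rules out the pathological separation phenomenon of Example~\ref{ex:non LNE} occurring inside a single fibre of $\ell$.
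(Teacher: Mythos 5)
This survey does not actually prove Theorem~\ref{theorem:criterion_arcs_surfaces}: the result is quoted from \cite{NeumannPedersenPichon2020a} and \cite{FantiniPedersenPichonSchober2022}, and the text only explains the terminology (principal part, contact orders of complex curves) and why the second reference improves on the first. So there is no in-paper proof to compare against line by line; your proposal has to be judged as a stand-alone argument, and as such it has a genuine gap. The overall architecture is the right one --- reduce to the arc criterion of Theorem~\ref{thm:arc criterion}, use a generic plane projection and the decomposition of the germ into pieces governed by the discriminant and polar curves, and show that the worst inner-versus-outer discrepancies are realized over test curves --- but the entire substance of the theorem lives in your third and fourth steps, which you state as things you ``would show'' and then, in the final paragraph, explicitly defer as the main obstacle. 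The claim that every pair of real arcs violating $q_i=q_o$ must already produce a violation among the finitely many pairs of components of some $\ell^{-1}(\gamma_j)$ is precisely the content of the theorem; asserting that it follows from ``a careful case analysis'' of $\mathcal A$- and $\mathcal D$-pieces is a plan, not a proof. In particular the passage from real arcs (the objects in Theorem~\ref{thm:arc criterion}) to complex curve components of pullbacks of test curves, and the proof that the distinguished finite set $\{E_0,\dots,E_s\}$ of exceptional components exhausts all relevant contact rates, are both left entirely open.

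Two further points are stated inaccurately enough to matter. First, a generic plane projection $\ell\colon(X,0)\to(\C^2,0)$ is \emph{not} a bilipschitz homeomorphism for the outer metric off the ramification locus: distinct sheets of the cover can be outer-close in the ambient space while having the same image, so $\ell$ collapses outer distances between sheets. The correct foundational statement (from the work underlying \cite{BirbrairNeumannPichon2014} and \cite{NeumannPedersenPichon2020a}) is that $\ell$ is a \emph{local inner} bilipschitz homeomorphism outside a polar wedge neighborhood of the polar curve; building the argument on the outer statement as you phrase it would fail. Second, in the direction (1)$\Rightarrow$(2) the equality of multiplicities of $\xi$, $\xi'$ and $\gamma_j$ does not follow from the one-line separation heuristic you give; it encodes the absence of ramification of $\ell$ along the relevant components and requires its own argument (this is exactly the delicate point you flag at the end). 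To turn the proposal into a proof you would need to carry out the reduction in your steps three and four in full, with the corrected inner-metric statement about $\ell$ as the starting point.
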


In the statement, the \emph{principal part} of $\ell^{-1}(\gamma_j)$ is a curve obtained by deleting from $\ell^{-1}(\gamma_j)$ some irreducible components, namely those that do not pull back to curvettes on a suitable canonical subgraph of the minimal good resolution of $(X,0)$.
Contact orders between two complex curve germs $\xi$ and $\xi'$ are defined in a similar way as those between real arcs by looking at the shrinking rates as $\epsilon>0$ goes to $0$ of the inner or outer distance between the sets $\xi\cap\{||x||=\epsilon\}$ and  $\xi'\cap\{||x||=\epsilon\}$.
These are simple to compute in practice; for example this can be done by looking at the Puiseux expansions of the images of the irreducible curves $\xi$ and $\xi'$ through a second generic plane projection, which is easy to do with computer software such as Singular or Maple.

The version of the criterion of \cite{FantiniPedersenPichonSchober2022} improves upon that of \cite{NeumannPedersenPichon2020a} because the latter requires the map $\rho$ used to construct a test family to be a good embedded resolution of the family of the projections via $\ell$ of the polar curves with respect to \emph{all} generic plane projections of $(X,0)$ (which in particular demands to determine the Nash transform of the latter) and not just to one of them, then to consider a greater number of irreducible components of $\rho^{-1}(0)$, to take \emph{all} possible test curves at any such component, and finally to pull those curves back again with respect to \emph{all} generic plane projections of $(X,0)$.

\subsection{Examples}

The improvement of Birbrair and Mendes's arc criterion discussed in the previous subsection lead to the discovery of several infinite families of LNE complex surface germs with isolated singularities, no examples of which were previously known.

\begin{theorem}[\cite{NeumannPedersenPichon2020b}]
	\label{theorem_minimal_singularities_are_LNE}
	Let $(X,0)$ be a normal complex surface germ assume that it is rational.
	Then $(X,0)$ is LNE if and only if it is a minimal singularity.
\end{theorem}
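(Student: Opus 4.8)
The plan is to prove the two implications separately, in both cases exploiting the refined arc criterion of Theorem~\ref{theorem:criterion_arcs_surfaces}, which reduces LNE-ness to the comparison of finitely many inner and outer contact orders of curve germs pulled back through a generic plane projection. The central technical input is the combinatorics of minimal singularities: recall that a rational surface singularity is minimal precisely when its minimal good resolution graph is such that all vertices are rational curves, the graph is a tree, and the multiplicities in the maximal ideal cycle coincide with the negatives of the self-intersections, equivalently $-E_v^2 \geq \delta_v$ for every vertex (where $\delta_v$ is the valency), with no adjacent vertices both having $\delta_v > -E_v^2$; in particular minimal singularities are exactly those whose generic hyperplane section is a reduced curve whose components are smooth and pairwise transverse, and whose polar curves are as simple as possible. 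The first step is therefore to set up, for both directions, the resolution $\rho$ used to construct a family of test curves, and to record how the discriminant $\Delta$ of a generic projection $\ell$ sits on it: for rational singularities the geometry of $\ell$ is controlled by the resolution graph via the results of Snoussi, Spivakovsky, and especially the description of generic projections and their polar/discriminant curves for normal surface germs.

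For the implication \emph{minimal $\Rightarrow$ LNE}, I would argue as follows. A minimal singularity $(X,0)$ has the property that a generic plane projection $\ell\colon(X,0)\to(\C^2,0)$ is, up to the bilipschitz-for-the-outer-metric identification of Teissier-type genericity, as mild as possible: the polar curve is empty or its image $\Delta$ is smooth, and more importantly every test curve $\gamma_j$ pulls back under $\ell$ to a curve whose principal part consists of components that are pairwise of the same multiplicity and whose mutual contact is ``seen already at the sphere'', so that $q_i=q_o$ holds automatically. Concretely, I would show that for a minimal singularity the inner rates and outer rates attached to the Lipschitz geometry (the rates $q$ appearing on the so-called thick-thin or carrousel-type decomposition of $(X,0)$, cf. \cite{BirbrairNeumannPichon2014}) are all equal to $1$ on the ``thin'' pieces as well — equivalently, $(X,0)$ is metrically conical with conical thin zones — and then invoke Theorem~\ref{theorem:criterion_arcs_surfaces}: since each $\ell^{-1}(\gamma_j)$ has principal part made of curvettes that are separated at the same depth in the resolution, the two contact orders coincide componentwise. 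The key computation here is that for minimal singularities the generic projection does not create any ``extra'' contact between the strands of $\ell^{-1}(\gamma_j)$ beyond what is forced by the transversality of the curvettes, which follows from the fact that minimal singularities have no polar ``wedges'' collapsing — this is where the combinatorial characterization $-E_v^2 \geq \delta_v$ is used.

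For the converse \emph{LNE $\Rightarrow$ minimal} (among rational germs), the strategy is contrapositive: assume $(X,0)$ is rational but not minimal, and produce a pair of test curves violating condition~(2) of Theorem~\ref{theorem:criterion_arcs_surfaces}. Non-minimality of a rational germ means there is a vertex $v$ of the minimal good resolution graph with $-E_v^2 < \delta_v$ (a vertex of ``bad'' valency), equivalently the maximal ideal cycle is not reduced along some component, equivalently a generic hyperplane section has a singular or non-transverse branch near $E_v$. The plan is to take a test curve $\gamma$ at such a bad vertex $E_v$ (or at a component of $\Delta^*$ lying near it), and to analyze $\ell^{-1}(\gamma)$: I expect its principal part to contain two components $\xi,\xi'$ which either have different multiplicities — immediately breaking (2) — or have the same multiplicity but satisfy $q_i(\xi,\xi') < q_o(\xi,\xi')$, because the bad vertex forces the two lifted strands to be inner-close (their inner contact is governed by the short path through the ``fast'' region of the resolution, which has rate $1$ there) while remaining outer-far (their outer contact is strictly larger, being governed by a genuine Puiseux exponent $>1$ coming from the non-transversality). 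This is the exact surface analogue of the curve computation in Example~\ref{ex:non LNE}, transplanted along the bad vertex. The main obstacle, and the step requiring the most care, is precisely this last point: showing that at a bad vertex the inner and outer contacts of the lifted strands genuinely differ. This demands a careful bookkeeping of the inner rates of $(X,0)$ along the resolution — i.e. understanding which vertices are ``fast'' (rate $1$, inner-metrically conical) and which are ``slow'' — together with the translation between the outer contact on $X$ and contact orders downstairs on $\C^2$ via the generic projection $\ell$; here one must control how $\ell$ distorts distances, using that $\ell$ is outer-bilipschitz on curve germs but can contract the ambient surface along the polar. I would carry this out by first establishing the dictionary ``bad vertex $\iff$ the corresponding piece of the thick-thin decomposition is a non-conical thin zone with a rate strictly bigger than $1$'', and then reading off the two contact orders directly from that rate.
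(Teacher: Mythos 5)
Your overall architecture (refined arc criterion for one direction, combinatorics of the resolution graph for the other) is reasonable, but there are two genuine problems. First, in the implication \emph{minimal $\Rightarrow$ LNE} your key computation rests on a false claim: you assert that for a minimal singularity all the relevant rates equal $1$, ``equivalently $(X,0)$ is metrically conical with conical thin zones''. This is not true, and it is explicitly contradicted by the discussion following Theorem~\ref{theorem_minimal_singularities_are_LNE}: minimal singularities constitute the first known infinite family of \emph{non-conical} LNE normal surface germs. LNE-ness requires $q_i(\xi,\xi')=q_o(\xi,\xi')$ for each tested pair, not that both contacts equal $1$; the thin zones of a minimal singularity genuinely carry inner rates $>1$, and the substance of the proof in \cite{NeumannPedersenPichon2020b} is precisely to verify the equality of the two contact orders on these non-conical pieces. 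Similarly, the generic polar curve of a minimal singularity is neither empty nor negligible; this is why the proof must invoke Spivakovsky's detailed study \cite{Spivakovsky1990} of the generic polar curves of minimal surface singularities, which is the actual technical input replacing your ``no extra contact is created'' heuristic. As stated, your argument for this direction would only prove LNE-ness of metrically conical germs, which excludes almost all minimal singularities.

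Second, for \emph{LNE $\Rightarrow$ minimal} you propose a contrapositive construction of a violating pair of test curves at a ``bad'' vertex. The paper's argument is different and much lighter: it is combinatorial, based on Laufer's algorithm \cite[Proposition~4.1]{Laufer1972} for computing the fundamental cycle $Z_{\min}$. The point is that LNE-ness already forces strong necessary conditions — the tangent cone is reduced (Theorem~\ref{thm:tangent cone}), so the maximal ideal cycle is reduced along the $\mathcal L$-nodes, and for a rational singularity the maximal ideal cycle coincides with $Z_{\min}$ — and Laufer's algorithm then propagates reducedness to all of $Z_{\min}$, which is exactly minimality. Your route would instead require controlling the generic polar and discriminant of an \emph{arbitrary} non-minimal rational singularity well enough to exhibit two lifted strands with $q_i<q_o$; this is considerably harder, is not obviously feasible in that generality, and is not needed. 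I would keep the skeleton of your first direction but replace the ``all rates are $1$'' step by the Spivakovsky-based analysis of the polar, and replace your second direction entirely by the Laufer-algorithm argument.
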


This result gives the first known infinite family of non-conical LNE normal complex surface singularities, and is in fact the main reason why the criterion of \cite{NeumannPedersenPichon2020a} was developed.
For a thorough discussion of rational surface singularities we refer the reader to \cite{Laufer1972,Nemethi1999}, here we only recall that a surface singularity $(X,0)$ is \emph{rational} if and only if the exceptional divisor $E$ of its minimal good resolution consists of rational curves and its dual graph is a tree which satisfies a numerical condition (see \cite[Theorem~4.2]{Laufer1972}). 
If moreover $E$ is reduced, that is if the pullback of a general element of the maximal ideal of $(X,0)$ vanishes with order one along each component of $E$, then $(X,0)$ is said to be \emph{minimal}.

The fact that a rational singularity which is LNE is minimal based on Laufer's algorithm \cite[Proposition~4.1]{Laufer1972} to determine the fundamental cycle $Z_{\min}$ (see the footnote~\ref{footnote:fundamental_cycle} on page~\pageref{footnote:fundamental_cycle} for the definition of $Z_{\min}$).
Conversely, in order to apply the criterion of Theorem~\ref{theorem:criterion_arcs_surfaces} and show that a minimal surface singularity is LNE, the proof of Theorem~\ref{theorem_minimal_singularities_are_LNE} relies on a detailed study of the generic polar curves of minimal surface singularities performed in \cite{Spivakovsky1990}.

More generally, an equidimensional complex germ $(X,0)$ of multiplicity $m$ and embedding dimension $e$ is said to be \emph{minimal} if it is reduced, Cohen--Macaulay, with reduced tangent cone, and Abhyankar's inequality $m\geq e - \dim(X,0) +1$ is in fact an equality.
The last condition means that minimal singularities generally live in high-dimensional ambient spaces.
At the other side of the spectrum, the first family of LNE normal hypersurface singularities in $\C^3$ was discovered later by Misev and the first author of this survey, who studied LNE-ness among $\emph{superisolated singularities}$.
In order to define those, consider a complex hypersurface germ $(X,0)$ in $(\C^3,0)$, defined by the equation $f(x,y,z)=0$, and write $f$ as a sum of polynomials $f=f_d+f_{d+1}+\ldots$, with $f_d\neq 0$ and each $f_i$ homogeneous of degree $i$.
Then $(X,0)$ is said to be \emph{superisolated} if the plane projective curve defined by $f_{d+1}=0$ does not intersect the singular locus of the projectivized tangent cone $C_0X=\big\{(x:y:z)\in\mathbb P^2_{\mathbb C}\,\big|\,f_{d}(x:y:z)=0\big\}$ of $(X,0)$.
This implies that a single blowup of $X$ along $0$ is sufficient to resolve its singularities.

\begin{theorem}[\cite{MisevPichon2021}]
	Let $(X,0)$ be a superisolated normal complex surface germ.
	Then $(X,0)$ is LNE if and only if its tangent cone is reduced and LNE.
\end{theorem}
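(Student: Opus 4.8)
The implication from LNE to ``reduced and LNE tangent cone'' requires nothing specific about superisolated germs: it is exactly Theorem~\ref{thm:tangent cone}. So the content is the converse, and the plan is to derive it from the refined arc criterion of Theorem~\ref{theorem:criterion_arcs_surfaces}. Fix a defining equation $f=f_d+f_{d+1}+\cdots$ of $(X,0)\subset(\C^3,0)$ with $f_d\neq 0$, so that $T_0X$ is the affine cone over the plane projective curve $C_0X=\{f_d=0\}\subset\mathbb P^2$, and recall that the superisolated condition means precisely that a single blowup $\sigma\colon X_1=\mathrm{Bl}_0X\to X$ resolves the singularity, its exceptional curve $E_1$ being isomorphic to $C_0X$ and lying inside the smooth surface $X_1$.

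The first step is to unwind the hypothesis on the tangent cone. Reducedness of $T_0X$ amounts to $f_d$ being squarefree, i.e.\ $C_0X$ being a reduced curve. For the LNE-ness: $T_0X$ is a cone, hence by Proposition~\ref{prop:cone} it is LNE if and only if its link is, and in a punctured neighbourhood of the line through a point $p\in\mathrm{Sing}(C_0X)$ the germ of $T_0X$ is a product of the plane curve germ $(C_0X,p)$ with a line. Using that a plane curve germ is LNE exactly when it consists of pairwise transverse smooth branches and that a product of a non-LNE germ with a line is non-LNE (both recalled among the examples of Section~\ref{sec:1}), one sees that the reduced cone $T_0X$ is LNE if and only if every singular point of $C_0X$ is an ordinary multiple point. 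Under this assumption, the minimal good resolution of $(X,0)$ is obtained from $X_1$ by one blowup at each singular point of $C_0X$, so its dual graph is simple: the components of $C_0X$ together with one further vertex per ordinary multiple point.

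The second, and main, step is the analysis of a generic plane projection $\ell\colon(X,0)\to(\C^2,0)$, of its polar curve $\Pi$ and of its discriminant $\Delta=\ell(\Pi)$. For superisolated germs these are governed by the plane curve $C_0X$ and its generic polar: away from $\mathrm{Sing}(C_0X)$ the strict transform of $\Pi$ meets $E_1$ at the finitely many \emph{polar points} (the intersection of $C_0X$ with the polar of $C_0X$ with respect to the centre of $\ell$), while its behaviour near each singular point of $C_0X$ is controlled by that point together with the higher-order part $f_{d+1}+\cdots$, which by the superisolated genericity does not make things worse. With such a description at hand one takes for the map $\rho$ of Theorem~\ref{theorem:criterion_arcs_surfaces} a good embedded resolution of $\Delta$ built from these data, extracts the small required subset $\{E_0,\dots,E_s\}$ of components of $\rho^{-1}(0)$, picks one test curve $\gamma_j$ at each, and for every $j$ computes the irreducible components $\xi,\xi'$ of the principal part of $\ell^{-1}(\gamma_j)$ — concretely, from the Puiseux expansions of their images under a second generic projection — checking the two conditions of the criterion, namely equality of the multiplicities of $\xi$, $\xi'$, $\gamma_j$ and the equality $q_i(\xi,\xi')=q_o(\xi,\xi')$. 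The test curves attached to the polar points reduce these verifications to the ones valid for the LNE cone $T_0X$; the test curves attached to the resolution of an ordinary multiple point of $C_0X$ reduce them to the (as simple as possible) local picture there; and putting the two together shows that all the conditions of Theorem~\ref{theorem:criterion_arcs_surfaces} are met, so that $(X,0)$ is LNE.

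I expect the main obstacle to be precisely the two tasks in the last step: obtaining a description of the generic polar curve $\Pi$ and of the discriminant $\Delta$ of a superisolated singularity sharp enough near $\mathrm{Sing}(C_0X)$ to pin down an explicit (finite, and in fact small) family of test curves, and then carrying out the delicate bookkeeping of inner versus outer contact orders demanded by Theorem~\ref{theorem:criterion_arcs_surfaces}. Two features have to be controlled with care. First, one must check that the higher-order terms $f_{d+1}+\cdots$, thanks to the superisolated genericity, do not create components of $\Pi$ or branches of $\Delta$ with bad contact orders over the singular points of $C_0X$; this is where the hypothesis that one blowup resolves $X$ is genuinely used, and it is what rules out the phenomenon occurring in the non-superisolated counterexample of the appendix of~\cite{FernandesSampaio2019}. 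Second, one must show that if some singular point of $C_0X$ fails to be an ordinary multiple point then the criterion fails — locally this is the $(\text{cusp})\times\C$ picture of Example~\ref{ex:non LNE}, which both obstructs LNE-ness of $(X,0)$ and matches the failure of $T_0X$ to be LNE — so that the two conditions in the statement are equivalent to LNE-ness, not merely sufficient for it.
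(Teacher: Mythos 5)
The survey itself gives no proof of this statement --- it is quoted from \cite{MisevPichon2021} --- but the approach you propose is the one actually used there: dispose of necessity by Theorem~\ref{thm:tangent cone}, translate ``$T_0X$ reduced and LNE'' into ``$C_0X$ reduced with only ordinary multiple points'' via Proposition~\ref{prop:cone} and the local product structure of the cone along the lines over $\mathrm{Sing}(C_0X)$, and then establish sufficiency by running the refined arc criterion on a generic plane projection. All of that framing is correct, and you also correctly identify where superisolatedness must enter (it is what separates this situation from the counterexample in the appendix of \cite{FernandesSampaio2019}, whose tangent cone is reduced and LNE).

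The gap is that the entire second step --- which is the mathematical content of the theorem --- is announced rather than carried out. To apply Theorem~\ref{theorem:criterion_arcs_surfaces} you must actually (i) describe the generic polar curve $\Pi$ and discriminant $\Delta$ of a superisolated germ precisely enough to name the components $E_0,\dots,E_s$ and the test curves $\gamma_j$, in particular proving (not asserting) that the terms $f_{d+1}+\cdots$ contribute no branches of $\Delta$ with unexpected contact over the points of $\mathrm{Sing}(C_0X)$; and (ii) compute, for each $j$ and each pair $\xi,\xi'$ of components of the principal part of $\ell^{-1}(\gamma_j)$, the multiplicities and the orders $q_i(\xi,\xi')$, $q_o(\xi,\xi')$, and verify they agree. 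Your text replaces both tasks with ``one checks that the conditions are met,'' and the phrase ``reduce these verifications to the ones valid for the LNE cone $T_0X$'' is not a reduction one can currently perform: there is no statement in the paper letting you transfer contact-order computations from $(X,0)$ to its tangent cone. These computations (Puiseux expansions of the lifts of the test curves over the polar points and over each ordinary $k$-fold point of $C_0X$) occupy the bulk of \cite{MisevPichon2021} and cannot be omitted. A minor further remark: your closing concern about showing that a non-ordinary singular point of $C_0X$ makes the criterion fail is unnecessary, since necessity was already settled in your first sentence by Theorem~\ref{thm:tangent cone}.
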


Recall that the tangent cone of a LNE singularity has to be reduced and LNE thanks to Theorem~\ref{thm:tangent cone}.

The further improvement obtained in \cite{FantiniPedersenPichonSchober2022} over the criterion of \cite{NeumannPedersenPichon2020a} allows to generalize the theorem above and obtain new families of LNE normal hypersurface singularities in $\C^3$.
In particular, the following result follows.

\begin{theorem}[\cite{FantiniPedersenPichonSchober2022}]
	\label{theorem_examples_LNE_from_refined_criterion}
Let $n$ and $k$ be two positive integers such that $n \geq k$ and let $(X,0)$ be the hypersurface in $(\C^3,0)$ defined by the equation
\[
\prod_{i=1}^{k} ( a_i x + b_i y ) - z^n  = 0,
\]
where the $(a_i,b_i)$'s are pairs of nonzero complex numbers such that the $k$ points $(a_i:b_i)$ of ${\mathbb P}^1_\C$ are pairwise distinct.
Then $(X,0)$ is LNE. 
\end{theorem}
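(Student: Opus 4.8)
The plan is to apply the refined arc criterion of Theorem~\ref{theorem:criterion_arcs_surfaces}, so the first step is to check that $(X,0)$ is a normal surface germ with an isolated singularity. For $k=1$ the germ is smooth, and for $k=2$ a linear change of the variables $x,y$ turns $X$ into the $A_{n-1}$ singularity $xy=z^n$, which is minimal and hence LNE by Theorem~\ref{theorem_minimal_singularities_are_LNE}; so assume $k\geq 3$. Write $g(x,y)=\prod_{i=1}^k(a_ix+b_iy)$, so that $X=\{f=0\}$ with $f=g(x,y)-z^n$. The Newton polyhedron of $f$ has its vertices $(k,0,0)$, $(0,k,0)$ and $(0,0,n)$ on the three coordinate axes, so $f$ is convenient, and $f$ is Newton non-degenerate -- this is precisely where the hypotheses that the $a_i,b_i$ are nonzero and the $(a_i:b_i)$ pairwise distinct enter, the relevant face truncations being $f$ itself, $(\prod a_i)x^k-z^n$, $(\prod b_i)y^k-z^n$, $g(x,y)$, $x^k$, $y^k$ and $z^n$. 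By Kouchnirenko's theorem $(X,0)$ then has an isolated singularity, and being a hypersurface it is Cohen--Macaulay, hence normal, so Theorem~\ref{theorem:criterion_arcs_surfaces} applies. Note that $(X,0)$ need not be a ``simple'' singularity: when $n=k$ it is the cone over the smooth projective plane curve $\{g=z^k\}\subset\mathbb P^2_{\mathbb C}$, of genus $\binom{k-1}{2}\geq 1$, so $(X,0)$ is not rational and Theorem~\ref{theorem_minimal_singularities_are_LNE} gives no information; when $n>k$ its tangent cone is the reduced union of the $k$ planes $\{a_ix+b_iy=0\}$, a union of linear subspaces and hence LNE, in agreement with the necessary condition of Theorem~\ref{thm:tangent cone}.

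Next I would assemble the data the criterion requires: a generic linear projection $\ell\colon(\C^3,0)\to(\C^2,0)$, its discriminant $\Delta$, a good embedded resolution $\rho$ of $\Delta$, a family of test curves $\{\gamma_0,\dots,\gamma_s\}$, and the pullbacks $\ell^{-1}(\gamma_j)$. If $v=(v_1,v_2,v_3)$, with $v_3\neq 0$, is the (generic) kernel direction of $\ell$, the polar curve of $\ell$ is $\Pi_\ell=X\cap\{g_v(x,y)-nv_3z^{n-1}=0\}$, where $g_v=v_1g_x+v_2g_y$; since $v$ avoids the tangent cone, $g_v$ is a product of $k-1$ linear forms none of which divides $g$, pairwise non-proportional for generic $v$. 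Eliminating $z$ via $z=nv_3\,g/g_v$ identifies $\Pi_\ell$ birationally with the plane curve germ $\{(nv_3)^n g^{n-1}=g_v^{\,n}\}\subset(\C^2_{x,y},0)$, whose Puiseux data is explicit: comparing the degrees $k(n-1)$ and $n(k-1)$, and using $n(k-1)<k(n-1)$ for $n>k$, one sees for instance that its tangent cone is $\{g_v=0\}$ with each line counted $n$ times. For a generic $\ell$ the map $\ell|_{\Pi_\ell}$ is birational onto $\Delta$, so $\Delta$ has the same abstract type and, after a short computation, explicit embedded Puiseux data as well; it is then resolved by a short sequence of blowups $\rho$, at the relevant exceptional components of which one picks the $\gamma_j$, the number $s+1$ of test curves being bounded in terms of $k$.

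One then pulls the test curves back to $X$. Each $\ell^{-1}(\gamma_j)$ is a curve germ with a controlled number of branches (bounded by $\deg(\ell|_X)=k$), and the Puiseux expansions of the branches of its principal part -- obtained, as indicated after Theorem~\ref{theorem:criterion_arcs_surfaces}, from the relation $g=z^n$ or from a second generic projection -- immediately yield their multiplicities and the outer contact orders $q_o(\xi,\xi')$. The remaining ingredient is the inner contact orders $q_i(\xi,\xi')$, which one reads off from the minimal good resolution of $(X,0)$ -- itself explicit, being a toric modification adapted to the Newton polyhedron of $f$, with a chain-like dual graph whose shape is governed by the Euclidean division of $n$ by $k$ -- together with the inner rates carried by its exceptional components. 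One then checks, for each $j$ and each pair $\xi\neq\xi'$ of branches of the principal part of $\ell^{-1}(\gamma_j)$, the two equalities required by Theorem~\ref{theorem:criterion_arcs_surfaces}: that $\xi$ and $\xi'$ have the same multiplicity as $\gamma_j$, and that $q_i(\xi,\xi')=q_o(\xi,\xi')$. This is a finite verification, and it is here that the refinement of \cite{FantiniPedersenPichonSchober2022} over the criterion of \cite{NeumannPedersenPichon2020a} is what keeps the amount of data manageable.

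The main obstacle is the control of the inner contacts $q_i$. Unlike $q_o$, which is visible on Puiseux series, $q_i$ depends on the inner metric of $(X,0)$, and since $(X,0)$ is in general not rational its inner metric has non-trivial ``thin'' zones. Making the verification work requires locating the test arcs precisely with respect to the thick--thin (carrousel) decomposition of $(X,0)$ and showing that, in each piece, their pairwise configuration is ``metrically trivial'' -- that no thin zone separates two branches of some $\ell^{-1}(\gamma_j)$ with distinct tangency. I expect this to be the technical heart of the argument and, as in the proof of Theorem~\ref{theorem_minimal_singularities_are_LNE}, to follow from the generic polar curve being distributed through the chain-like resolution graph of $(X,0)$ in a balanced way, which forces $q_i=q_o$ on every test pair.
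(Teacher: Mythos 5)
Your strategy is the one the paper intends: Theorem~\ref{theorem_examples_LNE_from_refined_criterion} is stated in this survey without proof, as a consequence of the refined criterion of Theorem~\ref{theorem:criterion_arcs_surfaces} established in \cite{FantiniPedersenPichonSchober2022}, so the comparison can only be made at the level of strategy, and at that level your plan matches. Your preliminary reductions are sound: the Newton non-degeneracy argument for isolatedness and normality, the disposal of $k\le 2$, and the role of $n\ge k$ via the tangent cone (note in passing that the case $n=k$ needs no criterion at all, since $(X,0)$ is then a cone over a smooth projective plane curve and is LNE by the conical example of Section~\ref{sec:1}). The polar computation and the elimination giving $(nv_3)^n g^{n-1}=g_v^{\,n}$ are also correct, with the caveat that eliminating $z$ identifies $\Pi_\ell$ with its image under the projection along the $z$-axis, which for $n>k$ is a direction contained in the tangent cone of $X$ and hence not the generic projection $\ell$; transferring the embedded Puiseux data of that plane curve to the actual discriminant $\Delta=\ell(\Pi_\ell)$ requires an additional (routine but necessary) argument.

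The genuine gap is that the argument stops exactly where the content begins. The criterion reduces LNE-ness to finitely many equalities of multiplicities and of contact orders $q_i(\xi,\xi')=q_o(\xi,\xi')$, and your proposal explains how one \emph{would} compute these quantities (Puiseux expansions for $q_o$, inner rates on an explicit toric resolution for $q_i$) but never exhibits the test curves $\gamma_j$, the branches $\xi$ of the principal parts of $\ell^{-1}(\gamma_j)$, or either side of any of the required equalities. You correctly single out the control of the inner contacts as the technical heart, but in its place you offer only a heuristic (``the polar is distributed in a balanced way''), which is precisely the assertion that has to be proved. Until that finite verification is carried out --- and it is the entire substance of the proof in \cite{FantiniPedersenPichonSchober2022} --- what you have established is only the (correct) applicability of the criterion, not the conclusion.
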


Observe that whenever $n<k$ then the tangent cone of $(X,0)$ is defined by $z^n=0$. 
As the latter is non reduced, then $(X,0)$ cannot be LNE, or this would contradict Theorem~\ref{thm:tangent cone}.

\subsection{Properties of LNE surfaces}
\label{subsection_properties}

Lipschitz Normally Embedded complex surface singularities have many remarkable properties.
For example, the authors of this survey, together with Belotto da Silva, proved the following:

\begin{proposition}[{\cite[Proposition~2.2]{BelottodaSilvaFantiniPichon2020b}}]
\label{proposition:BdSFP_simple_properties_LNE_surfaces}
	Let $(X,0)$ be a complex LNE normal surface germ.
	Then the minimal resolution of $(X,0)$ factors through the blowup of $X$ along $0$, the exceptional components of this blowup are reduced, and the topological type of $(X,0)$ determines its multiplicity (which a priori is a datum of analytic nature).
\end{proposition}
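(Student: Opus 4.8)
The three assertions are of decreasing difficulty. The plan is to establish the second one first, then use it together with the arc criterion to obtain the first, and finally to deduce the third from the first two.

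\emph{Reducedness of the exceptional divisor of the blowup.} The exceptional divisor of $b\colon\mathrm{Bl}_0X\to X$ is, scheme-theoretically, the projectivized tangent cone $\mathbb{P}(T_0X)=\mathrm{Proj}\big(\bigoplus_{n\geq 0}\mathfrak{m}^{n}/\mathfrak{m}^{n+1}\big)$, where $\mathfrak{m}\subset\mathcal{O}_{X,0}$ denotes the maximal ideal. Since $(X,0)$ is LNE, Theorem~\ref{thm:tangent cone} ensures that $T_0X$ is reduced, hence that the graded ring above is reduced, hence that its $\mathrm{Proj}$ is reduced; this is precisely the assertion.

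\emph{Factoring the minimal resolution through the blowup.} Let $\pi\colon\widetilde{X}\to X$ be the minimal resolution and $E=\pi^{-1}(0)$. By the universal property of the blowup, $\pi$ factors through $b$ if and only if $\mathfrak{m}\cdot\mathcal{O}_{\widetilde{X}}$ is invertible, equivalently if and only if the pulled-back linear system $|\pi^{*}\mathfrak{m}|$ has no base point on $E$. I would rule out base points using the arc criterion (Theorem~\ref{thm:arc criterion}), or rather its surface-theoretic refinement (Theorem~\ref{theorem:criterion_arcs_surfaces}): a base point $p$ would force the extra blowings up needed to make $\mathfrak{m}\mathcal{O}$ locally principal, and from two curvettes through $p$ one would build a pair of real arcs $\delta_1,\delta_2$ in $X$ whose extra tangency at $p$ is seen by the outer metric but not by the inner one, so that $q_{o}(\delta_1,\delta_2)>q_{i}(\delta_1,\delta_2)$ --- exactly as the non-reduced tangent cone produces such a pair in Example~\ref{ex:non LNE} --- contradicting LNE-ness. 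This is the step I expect to be the main obstacle, since it amounts to translating the birational ``base point'' condition into a controlled comparison of inner and outer contact orders; this is where one must invoke the fine description of the minimal good resolution of $(X,0)$, of its generic polar curve and of its generic hyperplane sections that underlies Theorem~\ref{theorem:criterion_arcs_surfaces}.

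\emph{The topological type determines the multiplicity.} Granting the first assertion, $\mathfrak{m}\cdot\mathcal{O}_{\widetilde{X}}=\mathcal{O}_{\widetilde{X}}(-Z)$ for an effective cycle $Z=\sum_i m_iE_i$ supported on $E$ (the maximal ideal cycle). As $\mathfrak{m}\mathcal{O}_{\widetilde{X}}$ is globally generated, $Z\cdot E_i\leq 0$ for all $i$; and taking two generic hyperplane sections $H_1,H_2$ of $(X,0)$, whose strict transforms on $\widetilde{X}$ are disjoint and meet $E$ at distinct smooth points thanks to the first assertion, a short computation with the principal divisors $\pi^{*}H_j$ gives $\mathrm{mult}(X,0)=(H_1\cdot H_2)_0=-Z^{2}$. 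It then remains to check that $Z$, and hence $-Z^{2}$, depends only on the weighted dual graph of $\pi$, that is, only on the topological type of $(X,0)$. The second assertion enters here: the components $E_i$ with $Z\cdot E_i<0$ are exactly those not contracted by $\widetilde{X}\to\mathrm{Bl}_0X$, and since their images make up the \emph{reduced} exceptional divisor of $b$ one gets $m_i=1$ along each such component, while the coefficients along the remaining, contracted components are the unique solution of the topological linear system $\{Z\cdot E_j=0\}$ --- solvable because the relevant submatrix of the negative-definite intersection matrix is invertible. Combined with the minimality of $Z$ among effective cycles enjoying these properties, this pins $Z$ down purely combinatorially, so that $\mathrm{mult}(X,0)$ is a topological invariant of $(X,0)$.
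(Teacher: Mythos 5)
A preliminary remark: this survey only \emph{states} the proposition, citing \cite[Proposition~2.2]{BelottodaSilvaFantiniPichon2020b} without reproducing a proof, so I am comparing your plan with the argument of that reference. Your second step is correct and is in fact the \emph{only} place where LNE-ness enters the whole proof: Theorem~\ref{thm:tangent cone} gives the reducedness of $T_0X$, hence of the exceptional components of the blowup. The first genuine gap is in your first step. The mechanism you propose cannot work as described: if $p$ were a base point of $\mathfrak{m}\mathcal{O}_{\widetilde X}$, then two generic hyperplane sections would each have a branch through $p$, and these branches are \emph{transversal} in $\C^N$ precisely because $T_0X$ is reduced, so $q_o=1$; since one always has $1\leq q_i\leq q_o$, such a pair automatically satisfies $q_i=q_o$ and can never witness a failure of LNE-ness. (The configuration is indeed impossible, but because two curves whose strict transforms meet the same point of the exceptional divisor have inner contact $>1$, which would violate the \emph{universal} inequality $q_i\leq q_o$, not the LNE equality.) The actual route bypasses the arc criterion entirely: reducedness of $T_0X$ forces the generic hyperplane section to consist of exactly $m=\mathrm{mult}(X,0)$ smooth pairwise transversal branches (since $m=\mathrm{mult}(X\cap H)\geq\#\{\text{branches}\}\geq\#\{\text{distinct tangent lines}\}=\deg T_0X=m$), and then Wagreich's formula $m=-Z_{\max}^2+H_1^*\cdot H_2^*$ combined with $-Z_{\max}^2=\sum_v m_v\,(H_1^*\cdot E_v)\geq H_1^*\cdot E_{\mathrm{red}}\geq\#\{\text{branches}\}=m$ forces $H_1^*\cdot H_2^*=0$ (no base points) and $m_v=1$ at every $\mathcal L$-node in one stroke. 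Your plan instead defers this step to the machinery behind Theorem~\ref{theorem:criterion_arcs_surfaces}, which is both circular in spirit and unnecessary.

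The second gap is in your third step. Your linear system $\{Z\cdot E_j=0\}$ determines $Z_{\max}$ only once you know \emph{which} vertices are the $\mathcal L$-nodes, and that set is a priori analytic data, not readable from the weighted dual graph; the appeal to ``minimality of $Z$'' does not close this, since the minimal cycle with $Z\cdot E_v\leq 0$ everywhere is $Z_{\min}$ and you have not shown $Z_{\max}=Z_{\min}$. That equality is the missing lemma, and it does follow from your step two: the cycle $D=Z_{\max}-Z_{\min}\geq 0$ has coefficient $0$ at every $\mathcal L$-node (both cycles have coefficient $1$ there), and if $D\neq 0$ then negative definiteness yields $D\cdot E_v<0$ for some $v$ in the support of $D$, whence $Z_{\max}\cdot E_v\leq Z_{\min}\cdot E_v+D\cdot E_v<0$, making $v$ an $\mathcal L$-node and contradicting the previous sentence. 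Only then is $\mathrm{mult}(X,0)=-Z_{\min}^2$ computable from the graph by Laufer's algorithm, as the survey's later discussion of Kodaira graphs implicitly uses.
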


The same paper also contains the following deeper result.


\begin{theorem}[{\cite[Theorems~1.1 and 1.2]{BelottodaSilvaFantiniPichon2020b}}]
	Let $(X,0)$ be a complex LNE normal surface germ.
	Then the topological type of $(X,0)$ determines the following data:
\begin{enumerate}
	\item \label{list_intro:property_polar} The  dual graph of the minimal good resolution of $(X,0)$ which factors through the blowup of the maximal ideal and through the Nash transform, decorated by two families of arrows corresponding respectively to the strict transform of a generic hyperplane section and to the strict transform of the polar curve of a generic plane projection.
	\item \label{list_intro:property_discriminant} The (embedded) topological type of the discriminant curve of a generic projection.
\end{enumerate}
	Moreover, this data can be computed explicitly from the dual graph of the minimal good resolution of $(X,0)$.
\end{theorem}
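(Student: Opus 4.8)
The plan is to combine three ingredients: Neumann's theorem that the topological type of a normal surface germ is equivalent to the dual graph $\Gamma$ of its minimal good resolution; the description, due to Birbrair--Neumann--Pichon, Neumann--Pichon, and Belotto da Silva--Fantini--Pichon, of the inner Lipschitz geometry of a normal surface germ in terms of a good resolution decorated by rational \emph{inner rates} $q_v\geq 1$, together with the Laplace-type formula relating these rates to the multiplicities $m_v=\Pi^*\cdot E_v$ of the strict transform of the polar curve $\Pi$ of a generic plane projection; and the rigidity imposed by LNE-ness through the arc criterion. First I would recall that, by Proposition~\ref{proposition:BdSFP_simple_properties_LNE_surfaces}, the minimal good resolution of an LNE germ $(X,0)$ factors through the blowup of the maximal ideal $\mathfrak m$; let $\pi\colon X_\pi\to X$ denote the minimal good resolution dominating both the blowup of $\mathfrak m$ and the Nash transform of $(X,0)$, with dual graph $\Gamma'$. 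The goal is to reconstruct $\Gamma'$, the two families of arrows, and the embedded topology of the discriminant from the datum of $\Gamma$ alone.

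The heart of the argument is to prove that the inner Lipschitz geometry of $(X,0)$ is determined by its topological type. By the Laplace-type formula of Belotto da Silva--Fantini--Pichon, once a good resolution graph is fixed the inner rates $q_v$ and the polar multiplicities $m_v$ determine each other, so it suffices to pin down either family. Since $(X,0)$ is LNE, the arc criterion (Theorem~\ref{thm:arc criterion}, in the refined form of Theorem~\ref{theorem:criterion_arcs_surfaces}) forces the inner and outer contact orders of curvettes to coincide. Using this, together with a combinatorial analysis of $\Gamma$ in the spirit of the study of the generic polar curves of minimal singularities, I would show that all the inner rates $q_v$ on $\Gamma$, hence all the polar multiplicities $m_v$, are computed from $\Gamma$; this step, establishing the topological rigidity of the metric data of an LNE germ, is the one I expect to be the main obstacle. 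Once the $m_v$ are known, the $\mathcal{L}$-nodes — the vertices carrying polar arrows — are exactly those with $m_v>0$; blowing up to resolve $\Pi$ and $\mathfrak m$ produces $\Gamma'$, on which the Nash-relevant subgraph is identified via the known relation between the Nash transform and the generic polar curve, and the maximal ideal cycle $Z$ is then computed by a Laufer-type computation sequence, yielding the arrows of the strict transform of a generic hyperplane section. As a byproduct one recovers the multiplicity of $(X,0)$ as $-Z^2$, reproving the last assertion of Proposition~\ref{proposition:BdSFP_simple_properties_LNE_surfaces}.

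Finally, to recover the embedded topological type of the discriminant curve $\Delta=\ell(\Pi)\subset(\C^2,0)$ of a generic plane projection $\ell\colon(X,0)\to(\C^2,0)$, I would use the commutative diagram relating a resolution $\pi'$ of $(X,0)$, obtained from $\pi$ by further blowups so as to be adapted to $\Pi$ and to a generic hyperplane section, a composition $\sigma$ of point blowups of $(\C^2,0)$, and the induced finite morphism $\bar\ell$ with $\sigma\circ\bar\ell=\ell\circ\pi'$. The combinatorics of $\bar\ell$ — in particular its degree and ramification along each $E_v$ — are governed by the quantities $Z\cdot E_v$ and $m_v$ already determined, so the dual graph of $\sigma$ together with the strict transform of $\Delta$, and hence the embedded topological type of $\Delta$ (equivalently its Puiseux data, or its carrousel), can be reconstructed from the decorated graph of $(X,0)$. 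Since every step of this reconstruction — computing the inner rates and the polar multiplicities, the cycle $Z$, the graph $\Gamma'$, the morphism $\bar\ell$, and the resolution of $\Delta$ — is explicit and algorithmic, the argument also yields the final \emph{Moreover} assertion.
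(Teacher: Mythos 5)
Your strategy follows the same route that this survey attributes to the cited paper \cite{BelottodaSilvaFantiniPichon2020b}: first show that for an LNE germ the inner rates are determined by the weighted dual graph $\Gamma$ of the minimal good resolution, then invoke the Laplace-type formula of \cite{BelottodaSilvaFantiniPichon2019} to convert the inner rates into the polar multiplicities $m_v=\Pi^*\cdot E_v$ and hence recover the decorated graph, and finally reconstruct the embedded topology of the discriminant via the induced covering of resolutions over a generic plane projection. Be aware that the survey itself contains no proof of this theorem; its entire ``proof'' is the two-sentence summary at the end of Subsection~\ref{subsection_properties}, and your outline is consistent with that summary.

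The genuine gap is the step you yourself flag as the main obstacle: proving that the topological type of an LNE germ determines its inner rates (Proposition~5.1 of \emph{loc.\ cit.}). Asserting that the arc criterion ``forces the inner and outer contact orders of curvettes to coincide'' and that a ``combinatorial analysis of $\Gamma$'' will then pin down the $q_v$ is not yet an argument: the equality $q_i=q_o$ only becomes useful once you exhibit, for each vertex $v$, pairs of arcs whose inner contact computes $q_v$ and whose outer contact is expressible from $\Gamma$ alone, and this is precisely where the identity $Z_{\max}=Z_{\min}$ and the reducedness of the exceptional components of the blowup of the maximal ideal (Proposition~\ref{proposition:BdSFP_simple_properties_LNE_surfaces}) enter in an essential way. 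All the content of the theorem is concentrated in that step, so your proposal is a correct roadmap rather than a proof. Two smaller points: you conflate the $\mathcal L$-nodes (the vertices with $Z_{\min}\cdot E_v<0$, which carry the arrows of a generic hyperplane section) with the vertices carrying polar arrows (those with $m_v>0$, usually called $\mathcal P$-nodes); and no Laufer-type computation sequence is needed to find the maximal ideal cycle, since for LNE germs it coincides with the fundamental cycle $Z_{\min}$, which is already a combinatorial datum of $\Gamma$.
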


This theorem generalizes to all LNE normal surface germs results that were previously known only for minimal surface singularities.
In that special case, the first property was established by Spivakovsky  \cite[III, Theorem 5.4]{Spivakovsky1990}, while the second one was later proven by Bondil \cite[Theorem 4.1]{Bondil2003}, \cite[Proposition 5.4]{Bondil2016}.

This result can be thought of as a unique solution, for the class of LNE normal surface singularities, to the so-called problem of \emph{polar exploration}, which asks to determine the generic polar variety of a singular complex surface germ.
This problem was studied for a general surface germ $(X,0)$ by the same authors together with Némethi \cite{BelottodaSilvaFantiniNemethiPichon2021}, relying on the study of the \emph{inner rates} of $(X,0)$.
Those are an infinite family of metric invariants that appeared naturally in the study of the Lipschitz geometry of $(X,0)$ in the foundational work \cite{BirbrairNeumannPichon2014}, and were then systematically studied in \cite{BelottodaSilvaFantiniPichon2019}.
From this point of view, it is worth noticing that in the paper \cite{BelottodaSilvaFantiniPichon2020b} referred to above it is also shown that the topological type of an LNE normal surface germ $(X,0)$ determines its inner rates (see Proposition~5.1 of \emph{loc. cit.}), and this combined with the main result of \cite{BelottodaSilvaFantiniPichon2019} is what allows them to determine the combinatorics of the polar curve of a generic plane projection of $(X,0)$.


\section{Open questions}

We conclude this survey by putting forward some open questions about LNE singularities that we find worth of interest.

\subsection{Behavior under blowup and Nash transform}

It was a long-held belief by several experts in the field that the point blowup of a LNE complex surface germ $(X,0)$ with an isolated singularity would most likely have itself only LNE singularities.
The following counterexample took therefore the authors by surprise.

\begin{example}
	The hypersurface $(X,0)$ in $(\C^3,0)$ defined by the equation
\[
	(x+y)(2x+y)(x+2y)-z^5 = 0
\]
	is LNE (it is a special case of Theorem~\ref{theorem_examples_LNE_from_refined_criterion} from \cite{FantiniPedersenPichonSchober2022}).
	However, the blowup of $X$ along $0$ has a singularity whose local equation is  $2 x_1^3 + 7x_1^2y_1 + 7x_1y_1^2 + 2y_1^3 - w^2=0$, whose tangent cone $w^2=0$ is non reduced. 
\end{example}	

On the other hand, the following  question is still open.

\begin{quest}\label{question_Nash_transform_is_LNE}
	Let $(X,0)$ be a LNE complex surface germ with an isolated singularity.
	Does the Nash transform of $(X,0)$ have itself only LNE singularities?
\end{quest}

Observe that by \cite[Corollary~4.7]{BelottodaSilvaFantiniPichon2020b}, if $(X,0)$ is LNE then its Nash transform has only \emph{sandwiched singularities}.
Since sandwiched singularities are rational, in order to give a positive answer to Question~\ref{question_Nash_transform_is_LNE} thanks to Theorem~\ref{theorem_minimal_singularities_are_LNE} one would have to show that they are minimal.

\subsection{Topological types of LNE surface singularities}

It is a very natural question to study the topological properties of LNE singularities.
In order to start such an investigation, it seems wise to restrict oneself to the case of normal complex surfaces.
In this context, it is well-known that, by a classical result of Neumann, the topological type of a normal surface singularity $(X,0)$ is equivalent to the datum of the weighted dual graph $\Gamma_\pi$ of the minimal good resolution $\pi$ of $(X,0)$, where each vertex is weighted by the genus and self-intersection of the corresponding exceptional component of $\pi$.

A first observation is then that being LNE is not a topological property, as shown by the following example, kindly provided to us by Jan Stevens.

\begin{example}
	\label{example:jan_stevens}
Let $X_1$ be the hypersurface in $\mathbb C^3$ defined by the equation $x^4+y^4+z^4=0$ and let $X_2$ be the surface in $\mathbb C^4$ defined by the equations $y^2=xz$ and $w^2=x^4+z^4$.
The two surface germs $(X_1,0)$ and $(X_2,0)$ are normal and have the same topological type, since for both of them the exceptional divisor of the minimal resolution is a single curve of genus $3$ and self-intersection $-4$.	
However, $(X_1,0)$ is LNE, since it is the cone over the smooth projective curve $x^4+y^4+z^4=0$, while $(X_2,0)$ is not, since its tangent cone, which is defined by the equations $w^2=0$ and $y^2=xz$, is non reduced.
\end{example}

However, given a weighted graph $\Gamma$ one can say that $\Gamma$ is \emph{LNE} if there exists a LNE normal surface singularity with resolution graph $\Gamma$.
The following question is therefore very natural.

\begin{quest}
		Is there a combinatorial characterization of LNE weighted graphs?
\end{quest}

The first results of \cite{BelottodaSilvaFantiniPichon2020b} provide some obstructions for a weighted graph $\Gamma$ to be LNE.
Denote by $Z_{\min}$ the fundamental cycle of $\Gamma$.
We then say that a vertex $v$ of $\Gamma(V)$ is a \emph{numerical $\mathcal L$-node} of $\Gamma$ if $E_v\cdot Z_{\min}<0$.
\footnote{\label{footnote:fundamental_cycle}
	Let us briefly recall the precise definitions of the combinatorial notions we use here, in particular that of the fundamental cycle and how to make sense of the intersection number $E_v\cdot Z_{\min}$. 
	A \emph{weighed graph} is a finite connected graph $\Gamma$ without loops and such that each vertex $v \in V(\Gamma)$ of $\Gamma$ is weighted by two integers, its \emph{genus} $g(v)\in\Z_{\geq 0}$ and its \emph{self-intersection} $e(v)\in \Z_{\leq 0}$.	
	Let $E=\bigcup_{v\in V(\Gamma)}E_v$ be a configuration of curves whose dual weighted graph is $\Gamma$, so that in particular $g(v)=g(E_v)$ and $E_v^{\;2}=e(v)$, and let $I_{\Gamma} = (E_v \cdot E_{v'})$ be the \emph{incidence matrix} of $\Gamma$.
	We assume that $I_{\Gamma}$ is negative definite.
	A \emph{divisor on $\Gamma$} is a formal sum $D =\sum_{v \in V(\Gamma)} m_v E_v$ over the set of the irreducible components of $E$ with integral coefficients.
	The \emph{fundamental cycle} $Z_{\min}$ of $\Gamma$ is then the unique nonzero divisor on $\Gamma$ which is minimal among those divisors $D$ satisfying $D\cdot E_v<0$ for all $v\in V(\Gamma)$.
	Its existence was shown by Artin, and its coefficients are all strictly positive.
}
It was then shown in \cite[Proposition~2.2.(ii)]{BelottodaSilvaFantiniPichon2020b} that whenever $(X,0)$ is a LNE singularity whose weighted dual graph is $\Gamma$ then $Z_{\min}$ coincides with the maximal ideal cycle $Z_{\max}$ of $(X,0)$.
\footnote{The cycle $Z_{\max}$ is the divisor on $\Gamma$ whose coefficient at a vertex $v$ is the order of vanishing of a generic linear form of $(X,0)$ along the exceptional component $E_v$ associated with $v$.
}
In particular, the numerical $\mathcal L$-nodes of $\Gamma$ coincide with its usual $\mathcal L$-nodes, which are the vertices which correspond to the exceptional components of the blowup of $X$ along $0$.
It then follows from Proposition~\ref{proposition:BdSFP_simple_properties_LNE_surfaces} that the numerical $\mathcal L$-nodes of $\Gamma$ are reduced, which means that whenever $\Gamma$ is the dual resolution graph of a LNE surface then it satisfies the following combinatorial condition:
\[
 \text{ writing } Z_{\min}=\textstyle\sum d_v E_v, \text{ we have } d_v = 1  \text{ for every } v \text{ such that }Z_{\min}\cdot E_v <0. 
\]

A weighted graph satisfying the condition above is called a \emph{Kodaira graph}.
Kodaira graphs are precisely those which can be realized as dual resolution graphs of the so-called \emph{Kodaira singularities}, a class of surface singularities defined in terms of a suitable family of curves and introduced by Karras \cite{Karras1980} after work of Kulikov \cite{Kulikov1975}.
It seems worthwhile of interest to fully investigate the relations between Lipschitz Normal Embeddings and Kodaira singularities (or the subclass of Kodaira singularities consisting of the so-called \emph{Kulikov singularities} introduced by Stevens \cite{Stevens2018}).
As a first step towards this, we mention that among rational singularities the only ones that are Kodaira are precisely the minimal singularities (see \cite[Example~2.8 plus Theorem~2.9]{Karras1980}), that is the ones that are also LNE (and Kulikov).
However, not all Kulikov singularities (and therefore not all Kodaira singularities) are LNE, since their projective tangent cone is not necessarily reduced (see \cite[Example~2.4]{Stevens2018} for a Kulikov singularity with reducible tangent cone; moreover its minimal resolution does not factor through the blowup of its maximal ideal).

\subsection{Generalizations of the arc criterion}

We have mentioned in Section~\ref{sec:surfaces} how improving the arc criterion for LNE-ness of Theorem~\ref{theorem:criterion_arcs_original} proved to be extremely useful in the study of LNE complex surface germs.
It would therefore be very interesting to find similar improvements in a more general setting, and in particular for complex germs of higher dimensions.

\begin{quest}
	Find an improvement of the arc criterion of Birbrair and Mendes (Theorem~\ref{theorem:criterion_arcs_original}) that only requires to compare the inner and outer contact orders of a finite (and in fact as small as possible) family of pairs of real arcs, for complex germs of arbitrary dimension.
\end{quest}	
	
Let now $(X,0)$ be an algebraic complex germ.
The \emph{arc space} $\mathcal L_\infty(X,0)$ of $(X,0)$ is a scheme that parametrizes all complex arcs on $X$ that are centered in $0$, which are by definition the points of $X$ with coordinates in $\C[[t]]$ and such that setting $t=0$ we obtain the complex point $0\in X$.
Its geometry, and the geometry of the \emph{jet spaces} of $(X,0)$ (the varieties parametrizing the \emph{jets} of $(X,0)$, which are its complex arcs truncated at a given order), reflect interesting properties of the singularity of $(X,0)$.
Their study plays an important role in many subareas of algebraic geometry, such as in the theory of motivic integration.
This leads us to formulate the following problem.

\begin{quest}
	Give a criterion for the LNE-ness of a germ $(X,0)$ in terms of the geometries of the arc or jet spaces of $(X,0)$.
\end{quest}

Such a criterion should involve testing the contact orders for generic arcs (or families of arcs, which are commonly called \emph{wedges}) of some suitable irreducible subschemes of the arc space.
In dimension 2, this could be related to the irreducible components of $\mathcal L_\infty(X,0)$, and hence to the \emph{essential valuations} of $(X,0)$, thus relating Lipschitz geometry to the notorious Nash problem solved by Fern\'andez de Bobadilla and Pe Pereira in \cite{BobadillaPereira2012}.
In arbitrary dimension, the LNE-ness of a germ $(X,0)$ could possibly be read in terms of its \emph{terminal valuations}, whose relation to the geometry of $\mathcal L_\infty(X,0)$ was detailed by de Fernex and Docampo \cite{FernexDocampo2016}.
	
More generally, the relations between the geometry of arc and jet spaces and Lipschitz geometry are completely unexplored. 
Some recent results, such as the appearance of Mather discrepancies in \cite{BelottodaSilvaFantiniNemethiPichon2021}, suggest that this may be a matter worth exploring.

\subsection{Higher dimensional LNE complex singularities}

As should now be clear to the reader, very little is known about complex LNE singularities starting from the dimension 3.
Since the simplest family of LNE surface germs consists of minimal surface singularities, the following question is very natural.

\begin{quest}
	Are minimal singularities in arbitrary dimension LNE?
\end{quest}

Recall that the definition of minimal singularities in arbitrary dimensions appears after Theorem~\ref{theorem_minimal_singularities_are_LNE}.

Minimal singularities form a building block in the Minimal Model Program.
Therefore, more generally, can we hope to characterize LNE singularities, or at least provide new classes of higher-dimensional examples, using the invariants appearing in the Minimal Model Program?

\addtocontents{toc}{\SkipTocEntry}
\subsection*{Acknowledgements}

We are very grateful to José Seade, who proposed to us to write a contribution for this volume, and to Jan Stevens, who showed us the example included in~\ref{example:jan_stevens}.
We would also like to thank all the friends and colleagues with whom we had many inspiring discussions about Lipschitz geometry, and in particular Helge Pedersen and Bernd Schober who graciously agreed to let us discuss our joint work in progress in this survey.
This work has been partially supported by the project \emph{Lipschitz geometry of singularities (LISA)} of the \emph{Agence Nationale de la Recherche} (project ANR-17-CE40-0023).
It was initiated at the \emph{Centre International de Rencontres Mathématiques} in Luminy during the \emph{Jean-Morlet Chair} held in 2021 by Javier Fern\'andez de Bobadilla; we thank the CIRM for the hospitality.

\bibliographystyle{alpha}
\bibliography{bibliography}

\vfill

\end{document}